\documentclass[a4paper, 12pt]{amsart}
\usepackage[a4paper, left=4.3cm, right=4.3cm, top=4cm, bottom=3.5cm ]{geometry}
\usepackage{amsmath, amscd}
\usepackage{amssymb, color, hyperref}
\usepackage{calligra}
\usepackage{mathrsfs}

\def\doi#1{{\small\href{https://doi.org/#1}{\path{doi:#1}}}}
\def\arxiv#1{{\small\href{http://www.arxiv.org/abs/#1}{\path{arXiv:#1}}}}
\def\url#1{{\small\href{#1}{\path{#1}}}}

\theoremstyle{plain}
\newtheorem{theorem}{\bf Theorem}[section]
\newtheorem{proposition}[theorem]{\bf Proposition}
\newtheorem{lemma}[theorem]{\bf Lemma}
\newtheorem{corollary}[theorem]{\bf Corollary}

\newtheorem{open-problem}[theorem]{\bf Open Problem}

\theoremstyle{definition}
\newtheorem{example}[theorem]{\bf Example}

\newtheorem{definition}[theorem]{\bf Definition}
\newtheorem{remark}[theorem]{\bf Remark}

\numberwithin{equation}{section}

\begin{document}

\title{Topological insights into Monoids and Module systems}

\author{Carmelo Antonio Finocchiaro and Doniyor Yazdonov}

\address{Dipartimento di Matematica e Informatica, Universit\`a di Catania,  95125 Catania, Italy}
\email{cafinocchiaro@unict.it}

\address{Department of Mathematics and Scientific Computing, University of Graz,  8010 Graz, Austria}
\email{doniyor.yazdonov@uni-graz.at}

\subjclass[2020]{20M12, 20M14, 13A15}

\keywords{spectral spaces,  module systems, ideal systems, multiplicative ideal theory}

\thanks{The first author was  partially supported by GNSAGA, by the research project PIACERI ``ACIVA - Anelli commutativi, loro
	ideali e varietà algebriche'' and by the research project PRIN ``Squarefree Gröbner degenerations, special varieties and related
	topics''. The second author was supported by FWF, Project Number DOC 183-N}

\begin{abstract}
Let $H$ be a monoid and let $G$ denote its quotient groupoid. 
We introduce the Riemann–Zariski space $\textup{Zar}(G| H)$ associated with $H$ and prove that it is a spectral space. 
We then show that, if $H$ is an $s$-Prüfer monoid, the space $\textup{Zar}(G| H)$ is homeomorphic to the prime $s$-spectrum of $H$ endowed with the Zariski topology.

Next, given a finitary ideal system $r$ on $H$, we prove that the space of $r$-ideals $\mathcal{I}_r(H)$, endowed with the Zariski topology, is spectral, and that the prime $r$-spectrum of $H$ is proconstructible in $\mathcal{I}_r(H)$.

We also introduce a new topology on the set $\mathcal{X}$ of all generalized $H$-module systems and show that $\mathcal{X}$ is a spectral space. As an application, we prove that the subspace of finitary generalized $H$-module systems $\mathcal{X}_{\textup{fin}}$, equipped with the induced topology, is proconstructible in $\mathcal{X}$.

Finally, we provide a characterization of those subsets of the set of overmonoids of $H$ that are quasi-compact.
\end{abstract}

\maketitle

\section{Introduction}

Let $K$ be a field and let $R$ be a subring of $K$. 
We denote by $\textup{Zar}(K| R)$ the set of all valuation domains having $K$ as quotient field and containing $R$ as a subring. The first topological study of the space $\textup{Zar}(K| R)$, in the case where $R$ is the prime subring of $K$, is due to Zariski, who proved that this space is quasi-compact when endowed with what is now known as the Zariski topology (see~\cite{zar44,zar-sam75}).  Subsequently, several authors showed using a variety of techniques that if $K$ is the quotient field of $R$, then $\textup{Zar}(K| R)$ is a spectral space (see, for example ~\cite{dobb87,dobb86}).

The aim of this paper is to extend several fundamental concepts and results from ring theory to the setting of monoids and to investigate how the algebraic 
and the topological properties of module systems are related. Let $G$ be a groupoid and $H$ a submonoid of $G$. We introduce the Riemann–Zariski space $\textup{Zar}(G| H)$ associated with the monoid $H$ and equip it with a natural analogue of the Zariski topology. In Theorem~\ref{Zar}, we prove via the ultrafilter criterion for spectral spaces (Lemma~\ref{spectral}) that $\textup{Zar}(G|H)$ is spectral.
When $G$ is the quotient group of $H$ and $H$ is an $s$-Prüfer monoid, we show in Theorem~\ref{prüfer} that $\textup{Zar}(G| H)$ is homeomorphic to the prime $s$-spectrum of $H$, endowed with the standard Zariski topology.
We further introduce a natural topology on the set $\mathcal{I}_r(H)$ of all $r$-ideals of $H$. In Theorem~\ref{pronconst}, we prove that $\mathcal{I}_r(H)$ is a spectral space and that the prime $r$-spectrum is proconstructible in $\mathcal{I}_r(H)$. 

 The second part of this paper is devoted to provide a deeper insight on ideal/module systems: the genesis of this deep and elegant theory originates from the notion of star operation introduced by Krull in \cite{Krull-35}. They turned out to be a powerful tool to study problems regarding factorizations of ideals and, more generally, they provide a perfect abstract setting to study many classes of rings from the point of view of Multiplicative Ideal Theory. Star operations were generalized by Okabe and Matsuda in \cite{Ok-Ma94} that introduced the more flexible notion of semistar operation on integral domains. In the last few decades, semistar operations became central in several questions regarding commutative algebra. Particularly fascinating is the connection between semistar operations and rings of rational functions like Kronecker function rings and Nagata rings. For a deeper insight into this circle of ideas, see \cite{fo-lo-forum,fo-lo-survey,fo-lo-jpaa-2009,fon-lop03,fon-jar-san04}. The investigation regarding the ideal theory of monoids led to define ideal systems and module systems which are the counterpart of star/semistar operations in monoid theory (see \cite{h-c98,halter-koch-2025}). Motivations for studying them come from various direction, one of them being that several properties of the arithmetical structure of ideals of integral domains can be derived from monoid theory. 

In \cite{Ca-Da14}, the authors introduce a new topology on the set of all semistar operations on an integral domain $R$. Inspired by this, in Section \ref{newtoponX} we define and study a Zariski topology on the set $\mathcal{X}$ of all  generalized  $H$-module systems. In Theorem~\ref{main1}, we show that $\mathcal{X}$ is spectral, and Corollary~\ref{corpro} establishes that the subspace $\mathcal{X}_{\textup{fin}}$ of finitary generalized  $H$-module systems is proconstructible in $\mathcal{X}$.
 Module systems and generalized $H$-module systems for monoids  play a role analogous to semistar operations in ring theory. Corollary~\ref{corpro} highlights both similarities and  differences in the behavior of these two structures (see Remark \ref{diff1}).

In Section~\ref{characterizationn}, we characterize the conditions under which a subspace of $\mathcal{R}(G|H)$ the set of all overmonoids of $H$ is quasi-compact (Theorem~\ref{main2}). This result further illustrates  another difference between semistar operations and generalized $H$-module systems (see Remark~\ref{diff2}). The section concludes with two generalizations of results from \cite{Ca-Da14} (Propositions~\ref{prop1} and~\ref{prop2}).

This paper is the first step towards studying the arithmetic of monoids through topological methods. In particular, it is a contribution to Problem 25 of the survey \cite{alfred-kim-loper}. Furthermore, it lays the groundwork for a potential topological  proof of \cite[Corollary 3.9]{doniyor26}.

\bigskip
\section{Preliminaries} Throughout the paper, by a monoid $H=(H, *)$ we mean a set $H\neq \emptyset$, together with an associative and  commutative law of composition $*:H\times H\longrightarrow H$, such that $H$ possesses an identity element and a zero element. We typically express $(H, *)$ using multiplication, written as $a * b=a \cdot b= ab$. The identity element is denoted as $1_H$ or just $1$, and the zero element as $0_H$ or just $0$. We denote $H^\times$ the set of invertible elements of $H$ and we set $H^\bullet=H\setminus\{0\}$. A monoid $H$ is called \textit{cancellative} if $ab=ac$ implies $b=c$ for all $a\in H^\bullet$ and $b,c\in H$. $H$ is said to be a \textit{groupoid} if $H^\bullet$ is a group; equivalently, $H^\bullet=H^{\times}$. The \textit{quotient groupoid} $G$ of a cancellative monoid $H$ is defined by 
\[G=\{a/b: a\in H,  \hspace{0.2cm} b\in H^\bullet\}.\]

Let $X$ be a set. Denote by $\mathcal{P}(X)$  the collection of all subsets of $X$ and by $\mathcal{P}_\textup{fin}(X)$ the collection of all finite subsets of $X$. 

Unless otherwise stated, we shall always assume that $1_H\neq 0_H$ and that the monoid $H$ is cancellative.

\medskip
Throughout, a ring $R$ is understood to be a commutative ring with identity, where $1 \neq 0$.

\bigskip
\subsection{Ideal systems} Let $H$ be a monoid. An \textit{ideal system} on $H$ is a map
\[r:\left\{
\begin{array}{rcl}
     \mathcal{P}(H) & \longrightarrow & \mathcal{P}(H) \\
    X & \longmapsto & X_r 
\end{array}
\right.\]
such that the following conditions hold for all subsets $X, Y\subseteq H$ and all elements $c\in H$: 
\begin{enumerate}
    \item[\textbf{(Id1)}] $X\cup\{0\}\subseteq X_r.$
    \item[\textbf{(Id2)}] $X\subseteq Y_r$ implies $X_r\subseteq Y_r.$
    \item[\textbf{(Id3)}] $cX_r=(cX)_r$. 
    \item[\textbf{(Id4)}] $cH\subseteq \{c\}_r.$
\end{enumerate}

We define $r_s: \mathcal{P}(H)\longrightarrow \mathcal{P}(H)$ by $$X_{r_s}=\bigcup_{E\in \mathcal{P}_{\textup{fin}}(X)}E_r.$$
The ideal system $r$ is called \textit{finitary} if $r=r_s$, that is  $$X_r=X_{r_s}:=\bigcup_{E\in \mathcal{P}_{\textup{fin}}(X)}E_r$$ holds for all subsets $X\subseteq H$.

Let $r$ be an ideal system on $H$. A subset $I\subseteq H$ is called an \textit{$r$-ideal} of $H$ if there exists a subset $X\subseteq H$ such that $I=X_r$. In this case, we call $I$ the \textit{$r$-ideal generated by} $X$. We denote by $\mathcal{I}_r(H)$ the set of all $r$-ideals of $H$. An $r$-ideal $P\in \mathcal{I}_r(H)$ is called \textit{prime}, if $H\setminus P$ is a multiplicatively closed subset of $H$. We denote by $r$-$\textup{spec}(H)$ the set of all prime $r$-ideals of $H$.

\begin{example}
    \leavevmode
    \begin{enumerate}
        \item Let $H$ be a monoid. For $X\subseteq H$, we define
\[X_s=\left\{
\begin{array}{rcl}
     \{0\}, & if & X=\emptyset, \\
    XH, & if & X\neq  \emptyset.
\end{array}
\right.\]
It can be easily verified that $s: \mathcal{P}(H) \longrightarrow \mathcal{P}(H)$ constitutes an ideal system on $H$, which we refer to as the \textit{$s$-system} of $H$.

\item Let $R$ be ring. For $X\subseteq R$, we define 
$$X_d:=\{a_1x_1+\ldots+a_nx_n : n\in \mathbb{N}_0, x_1, \ldots, x_n\in X, a_1, \ldots, a_n\in R\}$$ the ring ideal generated by $X$. It can be also easily verified that $s: \mathcal{P}(R) \longrightarrow \mathcal{P}(R)$ constitutes an ideal system on $R$. We call it \textit{$d$-system} of $R$. 
    \end{enumerate}
\end{example}

Let $G$ be a quotient groupoid of $H$. A subset $X\subseteq G$ is called \textit{$H$-fractional}, if $cX\subseteq H$ for some $c\in H^\bullet$. For a $H$-fractional subset $X\subseteq G$, if $c\in H^\bullet $ is such that $cX\subseteq H$, then we define $X_r=c^{-1}(cX)_r$. A subset $I\subseteq G$ is called a \textit{fractional $r$-ideal}, if there exists a subset $X\subseteq G$ such that $I=X_r$. A $r$-fractional ideal $I$ is said to be \textit{$r$-invertible}, if there exists a fractional $r$-ideal $J$ such that $I\cdot_r J=H$.
Finally, $H$ is called \textit{$r$-Prüfer monoid} if every non-zero $r$-finitely generated fractional $r$-ideal is $r$-invertible.

\subsection{Semistar operations}
Let $R$ be an integral domain and $K$ be the quotient field of $R$. We let denote $\overline{\mathbf{F}}(R)$ the set of all nonzero $R$-submodules of $K$. 

A \textit{semistar operation} on $R$ is a function \[\star:\left\{
\begin{array}{rcl}
     \overline{\mathbf{F}}(R) & \longrightarrow & \overline{\mathbf{F}}(R) \\
    F & \longmapsto & F^\star 
\end{array}
\right.\]
 satisfying the following axioms, for any nonzero element $k\in K$ and every $E,F\in\overline{\mathbf{F}}(R)$:
 \begin{enumerate}
     \item $(kF)^\star =kF^\star;$
     \item $E\subseteq F$ implies $E^\star \subseteq F^\star;$
     \item $F\subseteq F^\star;$
     \item $(F^\star)^\star=F^\star.$
 \end{enumerate}

 Semistar operations were introduced by A. Okabe and R. Matsuda in 1994 (see \cite{Ok-Ma94}).

 \begin{example}
\leavevmode
\begin{enumerate}

\item 
Let $L$ be an overring of $R$. Define the map
\[
\star_{\{L\}} \colon \overline{\mathbf{F}}(R) \longrightarrow \overline{\mathbf{F}}(R),
\qquad 
A \longmapsto A^{\star_{\{L\}}} := AL
\]
for all $A \in \overline{\mathbf{F}}(R)$. 
Then $\star_{\{L\}}$ is a semistar operation on $R$.

\item 
Let $\mathcal{S}$ be a nonempty collection of semistar operations on $R$. 
The infimum of $\mathcal{S}$, denoted by $\bigwedge(\mathcal{S})$, is the semistar operation on $R$ defined by
\[
F^{\bigwedge(\mathcal{S})}
=
\bigcap_{\star \in \mathcal{S}} F^{\star}
\qquad \text{for all } F \in \overline{\mathbf{F}}(R).
\]

\item 
Let $\Delta$ be a nonempty collection of overrings of $R$. 
By \emph{(2)}, the map
\[
\star_{\Delta}
:=
\bigwedge \bigl(\{\, \star_{\{B\}} \mid B \in \Delta \,\}\bigr)
\]
is a semistar operation on $R$. Equivalently, $\star_{\Delta}$ is given by
\[
F^{\star_{\Delta}}
=
\bigcap_{B \in \Delta} FB
\qquad \text{for all } F \in \overline{\mathbf{F}}(R).
\]

\end{enumerate}
\end{example}

 \bigskip
\subsection{The generalized $H$-module systems.}
Let $G$ be the quotient groupoid of $H$. A $\textit{generalized H-module system}$ on $G$ is a map 
\[r:\left\{
\begin{array}{rcl}
     \mathcal{P}(G) & \longrightarrow & \mathcal{P}(G) \\
    A & \longmapsto & A_r 
\end{array}
\right.\]
such that the following conditions are satisfied for all subsets $A,B$ of $G$ and all elements $c\in G$:
\begin{enumerate}
    \item[\textbf{(Id1)}] $A\cup\{0\}\subseteq A_r.$
    \item[\textbf{(M2)}] $A\subseteq B$ implies $A_r\subseteq B_r.$
    \item[\textbf{(Id3)}] $cA_r=(cA)_r$.
    \item[\textbf{(M4)}] $HA_r=A_r.$
\end{enumerate}

Our definition of a generalized $H$-module system is more general than $H$-module system defined in \cite{h-c98}, where Axiom \textbf{M2} is replaced by the Axiom 
\[\textbf{Id2}.\hspace{0.1cm}  A\subseteq B_r \textup{ implies } A_r\subseteq B_r. \]
Axiom \textbf{Id2} always implies \textbf{M2}. However, in general, the converse does not hold as demonstrated by the following example. Define a generalized $H$-module system $r:\mathcal{P}(G)\longrightarrow \mathcal{P}(G)$ by
\begin{equation}\label{example16}
    A_r=\left\{
\begin{array}{rcl}
     G, & \textup{ if} \hspace{0.3cm}0\in A, \\
    AH, & \textup{otherwise.}
\end{array}
\right.
\end{equation}

In this case, we observe  that $\{0\}\subseteq(\{1\})_r=H$, whereas $G=\{0\}_r\nsubseteq(\{1\})_r=H$.

Module systems provide an appropriate framework for extending semistar ideal theory to monoids. They were introduced by Franz Halter-Koch in 2001 (see \cite{h-c01}) and subsequently refined in \cite{h-c03} and \cite{h-c11}, in parallel with related developments in ring theory (see, for example, \cite{bag-fon04}, \cite{fon-lop03} and \cite{fon-jar-san04}). We now present further examples.

\begin{example}
\leavevmode
\begin{enumerate}
\item 
Let $\mathscr{F}$ be the collection of subsets $M \subseteq G$ satisfying:
\begin{itemize}
    \item $0 \in M$,
    \item $HM = M$,
    \item $cM \in \mathscr{F}$ for all $c \in G$.
\end{itemize}
Define
\[
r_{\mathscr{F}}(A)
=
\bigcap \{\, M \in \mathscr{F} \mid A \subseteq M \,\}.
\]
Then $r_{\mathscr{F}}$ is a generalized  $H$-module system on $G$.

\item 
If $r$ is a generalized $H$-module system on $G$, then its finitary closure
\[
r_{s}(A)
=
\bigcup_{F \in \mathcal{P}_{\mathrm{fin}}(A)} r(F),
\]
 is also a generalized  $H$-module system on $G$.
\end{enumerate}
\end{example}

\bigskip

\bigskip
\subsection{Topology.}  Let $H$ be a monoid. For subset $X\subseteq H$, we set 
\begin{center}
    $\mathbf{V}_r(X)=\{P\in r$-$\textup{spec}(H) : P \supseteq X\}$.
\end{center}
Then $\{\mathbf{V}_r(X): X\subseteq H\}$ forms the system of closed sets for a topology on  $r$-$\textup{spec}(H)$, we call it \textit{Zariski topology}. (Observe that $\mathbf{V}_r(H)=\emptyset$, $\mathbf{V}_r(\{0\}_r)=r\textup{-spec}(H), \hspace{0.2cm}\mathbf{V}_r(X)\cup \mathbf{V}_r(Y)=\mathbf{V}_r(X\cap Y)$ for all $X,Y\subseteq H$, and $\bigcap_{\lambda\in I}\mathbf{V}_r(X_i)=\mathbf{V}_r((\bigcup_{\lambda\in I}X_i)_r)$ for any family $\{X_\lambda\}_{\lambda\in I}$ of subsets of $H$.)  For $f\in H$, we set $\mathbf{D}_r(f)=r$-\textup{spec}$(H)\setminus\mathbf{V}_r(\{f\})$, called \textit{principal open set}. The system of principal open sets $\{\mathbf{D}_r(f):f\in H\}$ forms a basis for the Zariski topology on $r$-$\textup{spec}(H)$.
 
Let $G$ be a groupoid, and $H\subseteq G$ be submonoid. 
We define $\mathcal{R}(G|H)$ to be the set of the submonoids $S$ of $G$ such that $H\subseteq S$. Then $\mathcal{R}(G|H)$ has natural topology whose subbasis open sets are the sets of the type $$U(x):=\{S\in \mathcal{R}(G|H): x\in S\},$$ for every $x\in G$. Now let $\textup{Zar}(G|H)$ be the set of all valuation submonoid $S$ of $G$ with $H\subseteq S$. $\textup{Zar}(G|H)$ is subspace of $\mathcal{R}(G|H)$, and  the subspace topology on $\textup{Zar}(G|H)$, induced by the topology on $\mathcal{R}(G|H)$ defined above, is called the \textit{Zariski topology}. The set $\textup{Zar}(G|H)$, equipped with the Zariski topology, is called the \textit{Riemann-Zariski space of $G$ over $H$}.

Let $X$ be any set. A subset $\mathscr{F}\subseteq \mathcal{P}(X)$ is called a \textit{filter} on $X$ if the following properties are satisfied:
\begin{enumerate}
    \item $U\cap V\in \mathscr{F}$, for every $U,V\in \mathscr{F}$,
    \item if $U\in \mathscr{F}$ and $U\subseteq V\subseteq X$, then $V\in \mathscr{F}$.
\end{enumerate}
The collection of all filters on $X$, ordered by set inclusion,  forms a partially ordered set. Maximal elements of this poset are said to be \textit{ultrafilters} on $X$.

A topological space is called \textit{spectral} if it is homeomorphic to the prime $d$-spectrum of a ring, endowed with the Zariski topology. 

Let $X$ be a spectral space. The \textit{constructible topology} on $X$ is the topology whose basis is: $$\mathcal{B}:=\{U\cap (X\setminus V): U,V \subset X \textup{ quasi-compact opens}\}.$$ A \textit{proconstructible} subset of a spectral space is a set which is closed with respect to the constructible topology.

We end the section with the following two fundamental examples. 
\begin{example}
\leavevmode
\begin{enumerate}
    \item     Let $\Delta\subseteq \mathcal{R}(G|H)$ be a subset. Then the map $r_\Delta$ on $\mathcal{P}(G)$ defined by 
    \[
    r_\Delta: \mathcal{P}(G) \longrightarrow \mathcal{P}(G), \quad A \longmapsto \bigcap_{S \in \Delta} SA
\] is a generalized  $H$-module system on $G$.

\item Let $\tau \subseteq \mathcal{X}$. The infimum of $\tau$, denoted by $\bigwedge(\tau)$, is the generalized  $H$-module system on $G$ defined by
\[
A_{\bigwedge(\tau)} 
= 
\bigcap_{r \in \tau} A_{r}
\hspace{0.2cm} \text{for all } A \subseteq G.
\]
\end{enumerate}

\end{example}

\bigskip
 
\section{On some generalizations; from Rings to Monoids}
 In this section, we generalize several results from ring theory to  monoid theory. We begin by presenting the necessary concepts and lemmas.  

\begin{lemma}[\cite{h-c98}, Corollary (i), p. 170]\label{surjectivite}
    For every prime ideal $\mathfrak{p}\in s$-$\textup{spec}(H)$, there exists a valuation monoid $V$ for $H$ such that  $H\setminus \mathfrak{p}=H\cap V^\times$. 
\end{lemma}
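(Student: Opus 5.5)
The plan is a monoid version of the classical argument that every prime of a domain is the center of some valuation overring. Here $G$ is the quotient groupoid of $H$, and a valuation monoid $V$ for $H$ is a submonoid with $H\subseteq V\subseteq G$ such that for every $x\in G^\bullet$ either $x\in V$ or $x^{-1}\in V$. Fix $\mathfrak{p}\in s$-$\textup{spec}(H)$.

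The first step localizes at $\mathfrak{p}$. Put $T=H\setminus\mathfrak{p}$, which is multiplicatively closed and contains $1$. Set
\[
H_{\mathfrak{p}}=T^{-1}H=\{a/t: a\in H,\ t\in T\}\subseteq G .
\]
Two facts are needed. First, $H_{\mathfrak{p}}^\times\cap H=T$: if $a\in H$ with $a^{-1}=b/t$, then $ab=t\notin\mathfrak{p}$; since $\mathfrak{p}$ is an $s$-ideal, $a\notin\mathfrak{p}$. Second, the non-units of $H_{\mathfrak{p}}$ form the ideal $\mathfrak{p}H_{\mathfrak{p}}$.

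The second step is a Zorn argument. Consider the family $\Sigma$ of pairs $(S,\mathfrak{q})$ where $S$ is a submonoid with $H_{\mathfrak{p}}\subseteq S\subseteq G$, $\mathfrak{q}$ is a proper $s$-ideal of $S$ containing $\mathfrak{p}$, and $S\setminus\mathfrak{q}$ is a multiplicatively closed set of units of $S$. A simpler equivalent is to take pairs with $\mathfrak{q}S\ne S$ and order them by $(S,\mathfrak{q})\le(S',\mathfrak{q}')$ iff $S\subseteq S'$ and $\mathfrak{q}\subseteq\mathfrak{q}'$. The family contains $(H_{\mathfrak{p}},\mathfrak{p}H_{\mathfrak{p}})$, and unions of chains stay in it, so there is a maximal element $(V,\mathfrak{m})$.

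By maximality, $\mathfrak{m}$ is exactly the set of non-units of $V$. Otherwise adjoining inverses of elements outside $\mathfrak{m}$ (localizing $V$) would enlarge the pair, and enlarging $\mathfrak{m}$ to the non-units would also enlarge it. So $V$ is local with maximal ideal $\mathfrak{m}$.

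The third step, which I expect to be the main obstacle, shows that $V$ is a valuation monoid. Take $x\in G^\bullet$ with $x\notin V$ and $x^{-1}\notin V$. Then $V[x]=\bigcup_n x^nV$ is strictly bigger than $V$, so maximality forces $\mathfrak{m}V[x]=V[x]$, i.e. $1=mx^n$ for some $m\in\mathfrak{m}$, $n\ge 1$. Likewise $1=m'x^{-k}$ for some $m'\in\mathfrak{m}$, $k\ge1$. Then
\[
1=1^k\cdot 1^n=m^k x^{nk}\,(m')^n x^{-nk}=m^k(m')^n\in\mathfrak{m},
\]
which is a contradiction. In rings this step requires a minimal-degree polynomial argument because of sums. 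Here the monoid setting is easier: the identities $1=mx^n$ and $1=m'x^{-k}$ hold with single monomials, because the $s$-ideal $\mathfrak{m}V[x]$ consists of products only. I will check carefully that $\mathfrak{m}V[x]=V[x]$ really does give such a monomial identity, and that cancellativity handles the zero element.

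Finally compute $H\cap V^\times$. The inclusion $T\subseteq H_{\mathfrak{p}}^\times\subseteq V^\times$ is immediate. Conversely, if $a\in\mathfrak{p}$ then $a\in\mathfrak{m}$, so $a$ is not a unit of $V$. Hence $H\setminus\mathfrak{p}=H\cap V^\times$. If $\mathfrak{p}$ contains $0$, the zero element is never a unit, so it causes no difficulty.
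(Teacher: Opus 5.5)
The paper gives no proof of this lemma; it only cites Halter-Koch's book. Your argument is a correct, self-contained proof, essentially the monoid version of Chevalley's extension theorem, and it fits the paper's framework.

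\textbf{Relation to the paper.} In a monoid, $S\setminus S^\times$ is always the unique maximal $s$-ideal. So your pairs $(S,\mathfrak q)$, in the version where $S\setminus\mathfrak q$ consists of units, are exactly the overmonoids $S$ that dominate $H_{\mathfrak p}$, and your order is exactly $\le_d$. What you prove is therefore the converse of Lemma~\ref{domination}(2): maximal elements for domination are valuation monoids. Your construction also shows directly that $V$ dominates $H_{\mathfrak p}$, which is how the lemma is used in the proof of the Pr\"ufer theorem. Compared with the citation, you gain transparency. You also correctly observe that the monoid case is easier than the ring case: $1\in\mathfrak mV[x]$ yields a single identity $1=mx^n$, so the degree-minimization step disappears.

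\textbf{Points to tighten.} None of these is a real gap.
\begin{enumerate}
\item Your two families are not literally ``equivalent''. If you require $S\setminus\mathfrak q$ to consist of units, then $(V[x],\mathfrak mV[x])$ need not lie in the family. Compare instead with $(V[x],V[x]\setminus V[x]^\times)$; this is legitimate because every proper $s$-ideal of $V[x]$ lies in $V[x]\setminus V[x]^\times$.
\item ``Localizing $V$'' at $V\setminus\mathfrak m$ presupposes that $\mathfrak m$ is prime. You do not need it: enlarging $\mathfrak m$ to $V\setminus V^\times$ already contradicts maximality.
\item State explicitly that $0\in\mathfrak p$ by (Id1). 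Then $T\subseteq H^\bullet$, so $T^{-1}H\subseteq G$ makes sense.
\end{enumerate}

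\textbf{A shorter version.} You can drop the pair bookkeeping entirely. Maximize over submonoids $S$ with $H_{\mathfrak p}\subseteq S\subseteq G$ and $1\notin\mathfrak pS$; chains are clearly fine. Your monomial computation then shows that a maximal $V$ is a valuation monoid. Finally, $1\notin\mathfrak pV$ gives $\mathfrak p\cap V^\times=\emptyset$, while $H_{\mathfrak p}\subseteq V$ gives $H\setminus\mathfrak p\subseteq V^\times$.
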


\bigskip
For any local monoid $H$, we let $\mathfrak{m}_H$ denote the maximal $s$-ideal of $H$. Let $H_1, H_2$ be local rings. We say that $H_2$ \textit{dominates} $H_1$, written $H_1\leq_d H_2$, if $H_1$
is a submonoid of $H_2$ and $H_1\cap \mathfrak{m}_{H_2}=\mathfrak{m}_{H_1}$.

\bigskip
\begin{lemma}\label{domination}
    Let $G$ be a groupoid and let $$\mathcal{L}(G):=\{H: H \text{ is local submonoid of } G\}.$$
    Then following properties hold.
    \begin{enumerate}
        \item $\leq_d$ is a partial order on $\mathcal{L}(G)$.
       
        \item Valuation monoids of $G$ are maximal elements of $(\mathcal{L}(G), \leq_d)$. 
    \end{enumerate}
\end{lemma}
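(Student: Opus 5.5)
For part (1), I will check reflexivity, antisymmetry and transitivity of $\leq_d$ directly from the definition. Here a local monoid $H$ is one whose non-units $\mathfrak{m}_H=H\setminus H^\times$ form the unique maximal $s$-ideal. Reflexivity is immediate: $H\cap\mathfrak{m}_H=\mathfrak{m}_H$.

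For antisymmetry, $H_1\leq_d H_2$ and $H_2\leq_d H_1$ give $H_1\subseteq H_2\subseteq H_1$, so $H_1=H_2$.

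For transitivity, suppose $H_1\leq_d H_2\leq_d H_3$. Then $H_1\subseteq H_3$, and
\[H_1\cap\mathfrak{m}_{H_3}=H_1\cap H_2\cap\mathfrak{m}_{H_3}=H_1\cap\mathfrak{m}_{H_2}=\mathfrak{m}_{H_1}.\]
It is convenient to record the reformulation that $H_1\leq_d H_2$ is equivalent to $H_1\subseteq H_2$ together with $H_1\cap H_2^\times=H_1^\times$. The inclusion $H_1^\times\subseteq H_2^\times$ is automatic, so domination means exactly that nonunits of $H_1$ stay nonunits in $H_2$.

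For part (2), let $V$ be a valuation monoid of $G$, so for every $x\in G^\bullet$ either $x\in V$ or $x^{-1}\in V$. First I observe that $V$ is local. If $a,b$ are nonunits with $ab$ a unit, then $a^{-1}=b(ab)^{-1}\in V$, which is a contradiction. Hence $V\setminus V^\times$ is closed under multiplication by $V$, and it is the unique maximal $s$-ideal.

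Now suppose $V\leq_d W$ with $W\in\mathcal{L}(G)$; I must show $W=V$. Take $x\in W$ and suppose $x\notin V$. Then $x\neq 0$, since $0\in V$, so $x^{-1}\in V$. Moreover $x^{-1}$ is not a unit of $V$, since otherwise $x\in V$; hence $x^{-1}\in\mathfrak{m}_V$. By domination, $x^{-1}\in\mathfrak{m}_W$. But $x,x^{-1}\in W$ makes $x^{-1}$ a unit of $W$, which is a contradiction. Therefore $W\subseteq V$, and so $W=V$.

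The only delicate point is the handling of the zero element and the exact meaning of ``valuation monoid of $G$'' and ``local'' in the groupoid setting with zero. I will make sure to restrict the dichotomy $x$ or $x^{-1}$ to $x\in G^\bullet=G^\times$, and to note that $0$ lies in every submonoid considered. Beyond this bookkeeping, I expect no real obstacle.
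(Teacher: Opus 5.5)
Your proof is correct and follows the paper's argument. For part (2), the paper likewise picks $x\in W\setminus V$, uses the valuation property to get $x^{-1}\in\mathfrak{m}_V\subseteq\mathfrak{m}_W$, and derives a contradiction, phrased there as $1=x\cdot x^{-1}\in\mathfrak{m}_W$ rather than as $x^{-1}\in W^\times$. Your explicit checks of part (1) and of the localness of $V$ fill in what the paper calls obvious; the localness argument never actually uses the valuation property, since $H\setminus H^\times$ is an $s$-ideal in any monoid.
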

\begin{proof}

1. Obvious. 
   
2. Let $V$ be a valuation monoid of $G$. Note that $V \in \mathcal{L}(G)$. Suppose that there exists a local submonoid $H \in \mathcal{L}(G)$ such that $V \le_d H$ and $V \neq H$. Choose an element $x \in H \setminus V$. Since $V$ is a valuation monoid, it follows that $x^{-1} \in \mathfrak{m}_V$. Consequently, $x^{-1} \in \mathfrak{m}_H$. This yields a contradiction: because $1 \in H$, we must have $1 \notin \mathfrak{m}_H$, whereas the preceding argument implies $1 \in \mathfrak{m}_H$. Hence,  $V$ is a maximal element of $\mathcal{L}(G)$.
\end{proof}
\bigskip
 We proceed by recalling the basic properties of ultrafilters.
\begin{lemma}[\cite{jech98} Section 7]\label{ultrafilterproperty}
    Let $X$ be a set and $\mathscr{U}$ be a collection of subsets of $X$. Then the following conditions are equivalent;
    \begin{enumerate}
      \item $\mathscr{U}$ is an ultrafilter on $X$.
        \item $\mathscr{U}$ is an filter on $X$ and, if $Y,Z\subseteq X$ satisfy $Y\cup Z\in \mathscr{U}$, then either $Y\in \mathscr{U}$ or $Z\in \mathscr{U}$.
        \item $\mathscr{U}$ is a filter on $X$ and, for every subset $Y\subseteq X$, either $Y\in \mathscr{U}$ or $X\setminus Y\in \mathscr{U}$.
        
    \end{enumerate}
\end{lemma}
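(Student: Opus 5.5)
We prove the cycle of implications $(1)\Rightarrow(2)\Rightarrow(3)\Rightarrow(1)$, working throughout with the filter axioms as stated: closure under finite intersections and under supersets. We adopt the standard convention, implicit in the cited source, that filters are proper, i.e. $\emptyset\notin\mathscr{U}$ (equivalently $X\in\mathscr{U}$ and $\mathscr{U}\neq\mathcal{P}(X)$). Without it $\mathcal{P}(X)$ would itself be the largest filter and the lemma would fail.

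\emph{$(1)\Rightarrow(2)$.} Let $\mathscr{U}$ be an ultrafilter and suppose $Y\cup Z\in\mathscr{U}$ but $Y\notin\mathscr{U}$ and $Z\notin\mathscr{U}$. Consider
\[
\mathscr{U}_Y=\{W\subseteq X: W\supseteq U\cap Y \text{ for some } U\in\mathscr{U}\}.
\]
This set is closed under supersets and finite intersections, and it contains $\mathscr{U}$ and $Y$. Since $Y\notin\mathscr{U}$, maximality of $\mathscr{U}$ forces $\mathscr{U}_Y$ to be improper. Hence $U_1\cap Y=\emptyset$ for some $U_1\in\mathscr{U}$. The same argument for $Z$ gives $U_2\in\mathscr{U}$ with $U_2\cap Z=\emptyset$. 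Then $U_1\cap U_2\cap(Y\cup Z)=\emptyset$ lies in $\mathscr{U}$, contradicting properness.

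\emph{$(2)\Rightarrow(3)$.} Apply (2) to $Y\cup(X\setminus Y)=X$, which belongs to $\mathscr{U}$.

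\emph{$(3)\Rightarrow(1)$.} Suppose $\mathscr{F}$ is a proper filter with $\mathscr{U}\subsetneq\mathscr{F}$, and pick $Y\in\mathscr{F}\setminus\mathscr{U}$. By (3), $X\setminus Y\in\mathscr{U}\subseteq\mathscr{F}$. Then $\emptyset=Y\cap(X\setminus Y)\in\mathscr{F}$, contradicting properness of $\mathscr{F}$. Hence $\mathscr{U}$ is maximal.

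The only delicate point is $(1)\Rightarrow(2)$. There one must check that the "generated filter" $\mathscr{U}_Y$ is a filter, and that properness is exactly what maximality rules out. Everything else is immediate from the definitions.
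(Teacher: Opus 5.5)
Your proof is correct. The paper gives no proof of its own, only the citation to Jech, Section~7, and your argument is the standard one found there: the filter generated by $\mathscr{U}$ and $Y$ for $(1)\Rightarrow(2)$, and $Y\cap(X\setminus Y)=\emptyset$ for $(3)\Rightarrow(1)$. Your insistence on proper filters is justified: under the paper's literal definition, $\mathcal{P}(X)$ is the unique maximal filter and $(3)\Rightarrow(1)$ fails.

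One small correction to your parenthetical. $\emptyset\notin\mathscr{U}$ alone is not equivalent to ``$X\in\mathscr{U}$ and $\mathscr{U}\neq\mathcal{P}(X)$''. The empty family is closed under intersections and supersets and omits $\emptyset$, yet it satisfies (2) vacuously and fails (3). So you need both $X\in\mathscr{U}$ and $\emptyset\notin\mathscr{U}$, as in Jech's definition, and you do use $X\in\mathscr{U}$ in $(2)\Rightarrow(3)$.
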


\bigskip
Consider an arbitrary set $X$, and let $\mathcal{S} \subseteq \mathcal{P}(X)$ be a nonempty collection of subsets of $X$. Given an ultrafilter $\mathscr{U}$ on $X$ and any subset $Y \subseteq X$, we define:
$$
Y(\mathscr{U}) := Y_{\mathcal{S}}(\mathscr{U}) := \{ x \in X : \forall S \in \mathcal{S},  S \cap Y \in \mathscr{U} \iff x \in S \}.
$$
The set $Y_{\mathcal{S}}(\mathscr{U})$ is said to be \textit{the ultrafilter limit set of $Y$ with respect to $\mathscr{U}$}.

\medskip
\begin{example}
\leavevmode
\begin{enumerate}
    \item Let $X$ be any set and let $x\in X$. Then, the collection \[\mathscr{U}_x=\{Y\subseteq X: x\in Y\}\] is an ultrafilter on $X$, called \textit{the principal ultrafilter generated by} $x$.
    \item Let $H$ be a monoid, and set $X := s\textup{-spec}(H)$ and 
\[
\mathcal{S} := \{\, \mathbf{D}_s(f) \mid f \in H \,\}.
\]
Let $Y \subseteq X$, and let $\mathscr{U}$ be an ultrafilter on $X$ such that 
$X \setminus Y \notin \mathscr{U}$. Define
\[
\mathfrak{p}_{\mathscr{U}}
:=
\{\, f \in H \mid \mathbf{V}_s(f) \in \mathscr{U} \,\}.
\]
Then one readily verifies that $\mathfrak{p}_{\mathscr{U}} \in s\textup{-spec}(H)$ and that
\[
Y_{\mathcal{S}}(\mathscr{U}) = \{\mathfrak{p}_{\mathscr{U}}\}.
\]
\end{enumerate}

\end{example}
\bigskip
Here we present the so-called ultrafilter criterion, which we  frequently use to prove that a topological space is spectral.  
\begin{lemma}[\cite{Ca14}, Corollary 3.3]\label{spectral}
    Let $X$ be a topological space. Then $X$ is a spectral space if and only if $X$ is a $T_0$ and has a subbasis $\mathcal{S}$ of open sets such that for any ultrafilter $\mathscr{U}$ on $X$, the ultrafilter limit set $X_\mathcal{S}(\mathscr{U})$ is nonempty.
\end{lemma}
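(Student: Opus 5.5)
This is a cited result (\cite{Ca14}, Corollary 3.3), so the proof is the ultrafilter criterion of Finocchiaro. I will argue both directions, relying on Hochster's characterization of spectral spaces: quasi-compact, $T_0$, the quasi-compact opens closed under finite intersections and forming a basis, and sober.

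\emph{Necessity.} Let $X$ be spectral. Then $X$ is $T_0$. Take $\mathcal{S}$ to be the family of quasi-compact open subsets. Given an ultrafilter $\mathscr{U}$ on $X$, I form the constructible closed set
\[
C:=\bigcap_{S\in\mathcal{S},\,S\in\mathscr{U}} S\ \cap \bigcap_{S\in\mathcal{S},\,X\setminus S\in\mathscr{U}} (X\setminus S).
\]
Every finite subfamily of these sets lies in $\mathscr{U}$, so it has nonempty intersection. The constructible topology on a spectral space is compact, hence $C\neq\emptyset$. By Lemma~\ref{ultrafilterproperty}(3), any point of $C$ lies in $X_{\mathcal{S}}(\mathscr{U})$.

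\emph{Sufficiency.} Let $\mathcal{S}$ be a subbasis satisfying the limit condition. First I replace $\mathcal{S}$ by the collection $\mathcal{B}$ of finite intersections of members of $\mathcal{S}$. The limit condition survives this change: a limit point $x$ for $\mathcal{S}$ and $\mathscr{U}$ is also a limit point for $\mathcal{B}$, because ultrafilters are closed under finite intersections, and by Lemma~\ref{ultrafilterproperty}(2) a finite intersection $B_1\cap\dots\cap B_n$ lies in $\mathscr{U}$ if and only if each $B_i$ does.

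Next I define a finer topology $T^\#$ generated by $\mathcal{B}$ and the complements of members of $\mathcal{B}$. I show $T^\#$ is compact by the ultrafilter characterization. Every ultrafilter $\mathscr{U}$ converges in $T^\#$ to a point $x\in X_{\mathcal{S}}(\mathscr{U})$: each subbasic $T^\#$-neighbourhood of $x$ is either some $B$ with $x\in B$, which forces $B\in\mathscr{U}$, or some $X\setminus B$ with $x\notin B$, which forces $X\setminus B\in\mathscr{U}$. Hence $T^\#$ is compact. It is also Hausdorff, because the original topology is $T_0$ and its basic opens are clopen in $T^\#$.

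From compactness of $T^\#$ I derive the following.
\begin{enumerate}
\item Every $B\in\mathcal{B}$ is quasi-compact in the original topology. It is $T^\#$-closed, hence $T^\#$-compact, and the original topology is coarser.
\item The quasi-compact opens are exactly the finite unions of members of $\mathcal{B}$. Such sets are closed under finite intersection and form a basis, and $X$ itself is quasi-compact.
\end{enumerate}

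The main obstacle is sobriety. Let $Z$ be an irreducible closed set. I consider the family $\{B\cap Z: B\in\mathcal{B}\}\cup\{Z\}$. Irreducibility makes it closed under finite intersections among nonempty members, so it has the finite intersection property. I extend it to an ultrafilter $\mathscr{U}$ containing $Z$ and every $B$ meeting $Z$.

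Let $x\in X_{\mathcal{S}}(\mathscr{U})$. For every $B\in\mathcal{B}$, $x\in B$ if and only if $B\cap Z\neq\emptyset$, by the choice of $\mathscr{U}$. In particular, if $x\notin Z$, some basic $B$ contains $x$ and misses $Z$, which contradicts this equivalence. So $x\in Z$. Moreover, every basic open set meeting $Z$ contains $x$, so $Z=\overline{\{x\}}$, and $x$ is a generic point of $Z$. Uniqueness of the generic point follows from $T_0$.

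Hochster's theorem then shows that $X$ is spectral. The delicate points are the reduction from $\mathcal{S}$ to $\mathcal{B}$ and the construction of $\mathscr{U}$ in the sobriety step.
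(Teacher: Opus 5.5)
Your proof is correct. Note, though, that the paper gives no argument for this lemma: it is imported from \cite{Ca14} and used as a black box, so you are being compared with a citation rather than a proof.

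Your route is self-contained modulo exactly the two external facts the paper already records.
\begin{enumerate}
\item Necessity follows from compactness of the constructible topology (Lemma~\ref{hausd}), applied to the constructibly closed sets $S$ or $X\setminus S$ selected by $\mathscr{U}$.
\item Sufficiency comes from the patch-type topology $T^\#$ built from the finite-intersection closure $\mathcal{B}$ of $\mathcal{S}$. It is compact because every ultrafilter converges to a point of $X_{\mathcal{S}}(\mathscr{U})$. You then verify the conditions of Lemma~\ref{Hochster}, and the ultrafilter attached to an irreducible closed set handles sobriety.
\end{enumerate}

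A concrete benefit of writing it out this way is your step (1). It shows directly that every member of a subbasis satisfying the ultrafilter condition is quasi-compact. The paper imports exactly this fact from \cite{fin-fon-lop13} in the proof of Theorem~\ref{pronconst}, and from \cite{fin-fon-dar16} in the remark after Proposition~\ref{quasi-compactt}. With your argument, both become immediate consequences of the criterion itself.

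Three minor points of hygiene:
\begin{enumerate}
\item In the sobriety step the family should be $\{B\cap Z : B\in\mathcal{B},\ B\cap Z\neq\emptyset\}\cup\{Z\}$. As written, it contains $\emptyset$ whenever some $B$ misses $Z$.
\item The equivalence ``$B_1\cap\cdots\cap B_n\in\mathscr{U}$ iff every $B_i\in\mathscr{U}$'' is just the filter axioms, not Lemma~\ref{ultrafilterproperty}(2).
\item Your necessity step uses $\emptyset\notin\mathscr{U}$. This is part of the standard definition of a filter but is omitted from the paper's definition.
\end{enumerate}
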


\bigskip

\begin{theorem}\label{Zar}
Let $G$ be a groupoid and let $H$ be a submonoid of $G$. Then $\mathrm{Zar}(G| H)$
is a spectral space.
\end{theorem}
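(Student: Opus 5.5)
We apply the ultrafilter criterion (Lemma~\ref{spectral}) to $X:=\mathrm{Zar}(G|H)$ with the subbasis $\mathcal{S}:=\{U(x)\cap X : x\in G\}$. We write $U_X(x):=U(x)\cap X$. There are two things to check: that $X$ is $T_0$, and that every ultrafilter limit set is nonempty.

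\textbf{$T_0$.} Let $V\neq W$ be in $X$. Then without loss of generality there is $x\in V\setminus W$. The subbasic open set $U_X(x)$ contains $V$ but not $W$, so $X$ is $T_0$.

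\textbf{Nonempty limit sets.} Fix an ultrafilter $\mathscr{U}$ on $X$ and set
\[
V_{\mathscr{U}}:=\{x\in G : U_X(x)\in\mathscr{U}\}.
\]
By definition, $X_{\mathcal{S}}(\mathscr{U})$ consists of those $V\in X$ such that $x\in V$ if and only if $U_X(x)\in\mathscr{U}$, for every $x\in G$. So $X_{\mathcal S}(\mathscr U)=\{V\in X : V=V_{\mathscr U}\}$, and it suffices to show that $V_{\mathscr U}\in X$. We check the required properties one at a time.
\begin{enumerate}
\item \emph{$H\subseteq V_{\mathscr U}$.} For $h\in H$ we have $U_X(h)=X$, which lies in $\mathscr U$.
\item \emph{Closure under multiplication.} Since each $V\in X$ is a monoid, $U_X(x)\cap U_X(y)\subseteq U_X(xy)$. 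The filter axioms then give $xy\in V_{\mathscr U}$ whenever $x,y\in V_{\mathscr U}$.
\item \emph{Valuation property.} Take $x\in G^\bullet$. Each valuation monoid $V\in X$ contains $x$ or $x^{-1}$, so $U_X(x)\cup U_X(x^{-1})=X\in\mathscr U$. By Lemma~\ref{ultrafilterproperty}(2), one of $U_X(x)$, $U_X(x^{-1})$ lies in $\mathscr U$, that is, $x\in V_{\mathscr U}$ or $x^{-1}\in V_{\mathscr U}$.
\item \emph{Zero.} The element $0$ lies in every submonoid containing $H$, so $0\in V_{\mathscr U}$.
\end{enumerate}

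The main obstacle is the case $X=\emptyset$, where there is no ultrafilter and the statement holds trivially. Beyond that, the real point is to confirm that $V_{\mathscr U}$ meets the paper's exact definition of a valuation submonoid of $G$. If the definition requires $V\ne G$, or that $V$ be local with quotient groupoid $G$, we transfer that property through the ultrafilter in the same way. For example, $0^{-1}$ does not exist and $1\in V_{\mathscr U}$, and any condition of the form ``$x\notin V$'' holding for all $V$ passes to $V_{\mathscr U}$ because $U_X(x)=\emptyset\notin\mathscr U$.

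With these checks, $X_{\mathcal S}(\mathscr U)=\{V_{\mathscr U}\}\neq\emptyset$, and Lemma~\ref{spectral} shows that $\mathrm{Zar}(G|H)$ is spectral.
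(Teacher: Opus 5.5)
Your proof is correct and is essentially the paper's argument. You use the same subbasis $U(x)\cap\mathrm{Zar}(G|H)$ (the paper's $B(x)=\mathrm{Zar}(G|H[x])$), the same candidate limit point $\{x\in G : B(x)\in\mathscr{U}\}$ and the same checks; using the union form of Lemma~\ref{ultrafilterproperty} instead of the paper's complement form is an inessential variation.

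Your hedging paragraph is unnecessary. The intended notion of valuation submonoid is just ``$x\in V$ or $x^{-1}\in V$ for all $x\in G^\bullet$'', so $G\in\mathrm{Zar}(G|H)$ and $X\neq\emptyset$ always. Your side claim that a condition like $V\neq G$ would transfer is actually false: the limit of the monoids $\mathbb{Z}_{(p)}\subseteq\mathbb{Q}$ along a nonprincipal ultrafilter on the primes is all of $\mathbb{Q}$. This does not affect your main argument.
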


\begin{proof}
Let $M$ be a submonoid of $G$. For $x\in G$, denote by $M[x]$ the submonoid of $G$
generated by $M$ and $x$. The space $\mathrm{Zar}(G| H)$ admits a subbasis of open
sets of the form
 \begin{center}
        $B(x):=\textup{Zar}(G|H[x])=$\\
        $=\{S\in \mathcal{R}(G|H[x]):S \textup{ is a valuation submonoid of } G\}$
    \end{center} 
for all $x\in G$.

We first show that $\mathrm{Zar}(G| H)$ is $T_0$. Let $S,T\in \mathcal{R}(G| H)$
with $S\neq T$. Then there exists $x\in G$ such that either $x\in S\setminus T$ or
$x\in T\setminus S$. Without loss of generality, assume $x\in S\setminus T$. Then
$S\in B(x)$, while $T\notin B(x)$, proving the $T_0$ property.

Next, let $\mathscr{U}$ be an ultrafilter on $\mathrm{Zar}(G| H)$, and define
\[
H_{\mathscr{U}}
:=
\{x\in G : B(x)\in \mathscr{U}\}.
\]
We claim that $H_{\mathscr{U}}\in \mathrm{Zar}(G| H)$. Let $x,y\in H_{\mathscr{U}}$. Then $B(x)\cap B(y)\in \mathscr{U}$, and since
$B(x)\cap B(y)\subseteq B(xy)$, Lemma \ref{ultrafilterproperty} follows that $B(xy)\in \mathscr{U}$, hence
$xy\in H_{\mathscr{U}}$.

Now let $x\in G^{\bullet}$ and suppose that $x\notin H_{\mathscr{U}}$. Then
$B(x)\notin \mathscr{U}$, so $\mathrm{Zar}(G| H)\setminus B(x)\in \mathscr{U}$.
Since every element of $\mathrm{Zar}(G| H)$ is a valuation submonoid, we have
\[
\mathrm{Zar}(G| H)\setminus B(x)\subseteq B(x^{-1}),
\]
and thus, again by Lemma \ref{ultrafilterproperty}, we obtain  $B(x^{-1})\in \mathscr{U}$, which implies $x^{-1}\in H_{\mathscr{U}}$. For any $x\in H$, we obtain $B(x)=\textup{Zar}(G|H)\in \mathscr{U}$, and consequently, $H\subseteq H_{\mathscr{U}}$.

These observations show that $H_{\mathscr{U}}$ is a valuation submonoid containing
$H$, and therefore $H_{\mathscr{U}}\in \mathrm{Zar}(G| H)$. Moreover, by
construction, for every $x\in G$ we have
\[
H_{\mathscr{U}}\in B(x)
\quad\Longleftrightarrow\quad
B(x)\in \mathscr{U}.
\]
Hence
\[
H_{\mathscr{U}}\in \mathrm{Zar}(G|H)_{\mathcal{S}}(\mathscr{U}),
\hspace{0.4cm}
\text{ where } \hspace{0.4cm} \mathcal{S}=\{B(x):x\in G\}.
\]
Finally,  Lemma~\ref{spectral} follows that $\mathrm{Zar}(G| H)$ is a
spectral space.
\end{proof}

\bigskip
Let $H$ be a monoid with quotient groupoid $G$. The natural map 
\[\delta:\left\{
\begin{array}{rcl}
     \textup{Zar}(G|H) & \longrightarrow & r\textup{-spec}(H) \\
    S & \longmapsto & \mathfrak{m}_S\cap H 
\end{array}
\right.\]
where $\mathfrak{m}_S$ is the maximal $r$-ideal of $S$, is called \textit{the domination map of $H$}.

\bigskip
\begin{theorem}\label{prüfer}
Let $H$ be a monoid with quotient group $G$. Then the domination map
\[
\delta : \mathrm{Zar}(G| H)\longrightarrow r\text{-}\mathrm{spec}(H)
\]
is continuous and surjective. Moreover, if $H$ is an $s$-Prüfer monoid, then
$\delta$ is a homeomorphism.
\end{theorem}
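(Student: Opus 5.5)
Here $r$ is read as the $s$-system, so $\delta(S)=\mathfrak m_S\cap H$ with $\mathfrak m_S=S\setminus S^\times$. The plan is to check well-definedness, continuity, surjectivity, and then, in the $s$-Prüfer case, bijectivity and openness.

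\emph{Well-definedness.} For $S\in\mathrm{Zar}(G|H)$ the set $\mathfrak m_S$ is an $s$-ideal of $S$, and $S\setminus\mathfrak m_S=S^\times$ is multiplicatively closed. Hence $H\setminus(\mathfrak m_S\cap H)=H\cap S^\times$ is multiplicatively closed. Moreover $\mathfrak m_S\cap H$ is an $s$-ideal of $H$ containing $0$. So $\delta(S)\in s\text{-spec}(H)$.

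\emph{Continuity.} It suffices to pull back the basic open sets $\mathbf D_s(f)$ for $f\in H$. We have
\[
\delta^{-1}(\mathbf D_s(f))=\{S: f\notin\mathfrak m_S\}=\{S: f\in S^\times\}=\{S: f^{-1}\in S\}.
\]
For $f\neq0$ this set is $B(f^{-1})$, a subbasic open set as in the proof of Theorem~\ref{Zar}. For $f=0$ it is empty, since $0$ lies in every prime.

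\emph{Surjectivity.} Given $\mathfrak p\in s\text{-spec}(H)$, Lemma~\ref{surjectivite} gives a valuation monoid $V$ for $H$, with $H\subseteq V\subseteq G$, such that $H\setminus\mathfrak p=H\cap V^\times$. Then $\delta(V)=H\setminus(H\cap V^\times)=\mathfrak p$.

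\emph{The $s$-Prüfer case: injectivity.} The key claim is that in an $s$-Prüfer monoid every valuation overmonoid $S\supseteq H$ equals the localization $H_{\mathfrak p}$, where $\mathfrak p=\delta(S)$. This gives injectivity directly.

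The inclusion $H_{\mathfrak p}\subseteq S$ is clear, because elements of $H\setminus\mathfrak p$ are units of $S$. For the reverse inclusion, take $x=a/b\in S$ with $a,b\in H^\bullet$. The plan is:
\begin{enumerate}
\item Use $s$-invertibility of the finitely generated ideal $J=\{a,b\}_s$: there is a fractional $s$-ideal $J'$ with $JJ'H=H$. So $1=uc$ for some $u\in\{a,b\}$ and $c\in J'$.
\item Note that $cb\in H$ and $ca\in H$.
\item If $u=b$, then $cb=1$ and $x=ca\in H$.
\item If $u=a$, then $ca=1$, so $x^{-1}=cb\in H$. As $x\in S$ too, $x^{-1}$ is a unit of $S$ lying in $H$, hence $x^{-1}\in H\setminus\mathfrak p$. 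Therefore $x\in H_{\mathfrak p}$.
\end{enumerate}
A by-product of this argument is that $H$ is $s$-Prüfer exactly when every localization $H_{\mathfrak p}$ is a valuation monoid. That is, for each $x\in G$, either $x$ or $x^{-1}$ lies in $H$.

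\emph{The $s$-Prüfer case: openness.} By the above, $\delta^{-1}(\mathfrak p)=H_{\mathfrak p}$, and it remains to show that $\delta(B(x))$ is open for every $x\in G^\bullet$.
\begin{enumerate}
\item If $x\in H$, then $B(x)$ is the whole space and its image is $s\text{-spec}(H)$.
\item Otherwise $y:=x^{-1}\in H$ by the by-product above. Then $x\in H_{\mathfrak p}$ iff $y$ is a unit in $H_{\mathfrak p}$ iff $y\notin\mathfrak p$.
\item Hence $\delta(B(x))=\mathbf D_s(y)$, which is open.
\end{enumerate}

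The main obstacle is the invertibility step: extracting the principal-ideal behaviour ``$x\in H$ or $x^{-1}\in H$'' from $s$-invertibility of two-generated ideals. This depends on unpacking $I\cdot_s J=H$ as $(IJ)H=H$ with $1$ a single product. If the convention for $\cdot_r$ differs, I would instead use the known characterization of $s$-Prüfer monoids via valuation localizations \cite{h-c98}.
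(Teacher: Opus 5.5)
Your proof is correct. For continuity and surjectivity it matches the paper's argument, and you also handle $f=0$, which the paper's formula $\delta^{-1}(\mathbf{D}_r(x))=B(x^{-1})$ passes over.

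For the $s$-Prüfer part you take a genuinely different route.

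\textbf{The paper's route.} It follows the ring-theoretic proof. For injectivity, it quotes without proof that $H_{\mathfrak p}$ is a valuation monoid when $H$ is $s$-Prüfer. It then notes that any $V$ with $\delta(V)=\mathfrak p$ dominates $H_{\mathfrak p}$, and applies maximality of valuation monoids under domination (Lemma~\ref{domination}). For openness, it shows that $\delta(B(x))$ is the complement of $\mathbf{V}_s((H:x))$, using the conductor $(H:x)$.

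\textbf{Your route.} You unpack $s$-invertibility of the ideal $aH\cup bH$ directly. Since $(JJ')_s=JJ'$, the relation $1=uc$ forces $c=u^{-1}$, so $a\mid b$ or $b\mid a$. In other words, $H$ itself is a valuation monoid: $s$-invertible ideals are principal, so $s$-Prüfer already means valuation. This makes both remaining steps explicit. Every $S\in\mathrm{Zar}(G|H)$ equals $H_{\delta(S)}$, which gives injectivity. And $\delta(B(x))$ is either the whole spectrum or the principal open set $\mathbf{D}_s(x^{-1})$, which gives openness.

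\textbf{Comparison.} Your argument is self-contained: it needs neither the unproved assertion about localizations of $s$-Prüfer monoids nor the domination lemma. It also shows how strong the hypothesis is in the monoid setting. The paper's conductor-and-domination argument, by contrast, transfers verbatim from Prüfer domains, where the localizations are valuation domains but the ring itself need not be.

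Your "by-product" is stated as an equivalence, but only the forward direction is proved. Only that direction is used, and the converse is immediate, so this is not a gap.
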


\begin{proof}
Let $\mathfrak{p}$ be a prime $s$-ideal of $H$. By Lemma~\ref{surjectivite},
there exists a valuation monoid $V$ dominating $H$ such that
$
H\setminus \mathfrak{p}=H\cap V^{\times}.$
Equivalently,
\[
\delta(V)=H\cap \mathfrak{m}_V=\mathfrak{p},
\]
which proves that $\delta$ is surjective.

To prove continuity, we show that
\[
\delta^{-1}(\mathbf{D}_r(x))=B(x^{-1})
\qquad\text{for all } x\in H.
\]
Indeed, let $S\in \mathrm{Zar}(G| H)$. Then
\[
\delta(S)=\mathfrak{m}_S\cap H\in \mathbf{D}_r(x)
\iff x^{-1}\in S
\iff S\in \mathrm{Zar}(G| H[x^{-1}]),
\]
and the claim follows. Hence $\delta$ is continuous.

Now assume that $H$ is an $s$-Prüfer monoid. Let $V,W\in \mathrm{Zar}(G| H)$
such that $\delta(V)=\delta(W)=\mathfrak{p}$. Then both $V$ and $W$ dominate
$H_{\mathfrak{p}}$. Since $H$ is $s$-Prüfer, the localization
$H_{\mathfrak{p}}$ is a valuation monoid; therefore, by
Lemma~\ref{domination}, it is maximal with respect to domination. Consequently,
\[
H_{\mathfrak{p}}=V=W,
\]
which shows that $\delta$ is injective.

It remains to show that $\delta$ is an open map. It suffices to prove that
$\delta(B(x))$ is open in $s$-$\mathrm{spec}(H)$ for all $x\in G$. We claim that
\[
\delta(B(x))
=
s\text{-}\mathrm{spec}(H)\setminus \mathbf{V}_r((H:x)),
\]
where $(H:x)=\{h\in H: hx\in H\}$.

Let $\mathfrak{p}\in s$-$\mathrm{spec}(H)$. If $\mathfrak{p}\in \delta(B(x))$, then
$H_{\mathfrak{p}}\in B(x)$, since $\delta(H_{\mathfrak{p}})=\mathfrak{p}$. Write
$x=h/s$ with $h\in H$ and $s\in H\setminus \mathfrak{p}$. Then
$s\in (H:x)\setminus \mathfrak{p}$, so $(H:x)\nsubseteq \mathfrak{p}$, that is,
$\mathfrak{p}\notin \mathbf{V}_r((H:x))$.

Conversely, if $(H:x)\nsubseteq \mathfrak{p}$, choose
$s\in (H:x)\setminus \mathfrak{p}$. Then $s\in H_{\mathfrak{p}}^{\times}$ and
$sx\in H\subseteq H_{\mathfrak{p}}$, which implies $x\in H_{\mathfrak{p}}$.
Therefore $H_{\mathfrak{p}}\in B(x)$ and
$\mathfrak{p}\in \delta(B(x))$.
This proves the claimed equality and shows that $\delta$ is an open map. 
\end{proof}

\bigskip
\begin{lemma}[\cite{dickman19} Theorem 2.1.3]\label{proconstr} Let $X$ be a spectral space and $Y\subseteq X$ be a subset. If $Y$ is proconstructible in $X$, then $Y$ is spectral space. 
    \end{lemma}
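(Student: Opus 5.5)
Since $Y$ is proconstructible in $X$, it is closed in the constructible topology. I would show that $Y$, with the subspace topology from $X$, is spectral by applying the ultrafilter criterion of Lemma~\ref{spectral}. For the subbasis of $Y$ I take
\[
\mathcal{S}_Y:=\{U\cap Y : U\subseteq X \text{ quasi-compact open}\}.
\]
This is indeed a basis of the subspace topology, because the quasi-compact opens form a basis of the spectral space $X$.

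The $T_0$ property is immediate, since it is inherited by subspaces of the $T_0$ space $X$.

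Now fix an ultrafilter $\mathscr{U}$ on $Y$. I push it forward to $X$ by setting
\[
\mathscr{U}^X:=\{Z\subseteq X : Z\cap Y\in \mathscr{U}\},
\]
which is an ultrafilter on $X$ containing $Y$. Since $X$ is spectral, its constructible topology is compact Hausdorff, so $\mathscr{U}^X$ converges in that topology to a unique point $x$. Unwinding this convergence in terms of the constructible basis $\mathcal{B}$ gives, for every quasi-compact open $U$,
\[
x\in U \iff U\in \mathscr{U}^X .
\]
The forward direction holds because $U$ is a constructible neighbourhood of $x$. For the converse, if $x\notin U$ then $X\setminus U$ is a constructible neighbourhood of $x$, hence lies in $\mathscr{U}^X$, so $U\notin\mathscr{U}^X$.

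The main point is to show that $x\in Y$, and this is exactly where proconstructibility enters. Every constructible-open neighbourhood $N$ of $x$ lies in $\mathscr{U}^X$. Since $Y\in\mathscr{U}^X$ as well, we get $N\cap Y\neq\emptyset$. Hence $x$ lies in the constructible closure of $Y$, which is $Y$ itself because $Y$ is proconstructible.

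Finally, for $U$ quasi-compact open in $X$, I translate back to $Y$: $x\in U\cap Y$ iff $x\in U$ iff $U\in\mathscr{U}^X$ iff $U\cap Y\in\mathscr{U}$. This says precisely that $x\in Y_{\mathcal{S}_Y}(\mathscr{U})$, so this limit set is nonempty. Lemma~\ref{spectral} then yields that $Y$ is spectral.

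The only nontrivial input is the standard fact that the constructible topology on a spectral space is compact Hausdorff. If we prefer to avoid it, we can instead apply Lemma~\ref{spectral} to $X$ itself with the subbasis of quasi-compact opens to obtain the point $x$, and characterize proconstructible sets as those closed under ultrafilter limits.
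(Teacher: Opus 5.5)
Your proof is correct. It necessarily takes a different route from the paper, which gives no argument for this lemma and simply imports it from \cite[Theorem 2.1.3]{dickman19}.

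You derive the lemma instead from two results the paper already quotes, Lemma~\ref{spectral} and Lemma~\ref{hausd}, and each step holds:
$\mathcal{S}_Y$ is a basis of $Y$;
$\mathscr{U}^X$ is an ultrafilter on $X$ containing $Y$;
its constructible limit $x$ satisfies $x\in U\iff U\in\mathscr{U}^X$, because both $U$ and $X\setminus U$ are basic constructible open sets;
and patch-closedness of $Y$ forces $x\in Y$.

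In topological terms, you are observing two things. First, the topology on $Y$ generated by the sets $U\cap Y$ and their complements is the restriction of the constructible topology of $X$, which is compact since $Y$ is patch-closed. Second, nonemptiness of all limit sets $Y_{\mathcal{S}_Y}(\mathscr{U})$ is exactly compactness of that topology.

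The citation buys brevity. Your argument makes the paper's toolkit self-contained and consistent with the ultrafilter proofs of Theorems~\ref{Zar}, \ref{pronconst} and~\ref{main1}. It also gives a bit more for free: each $U\cap Y$ is closed in the compact induced constructible topology, hence quasi-compact in $Y$, so the inclusion $Y\hookrightarrow X$ is a spectral map.

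The only soft spot is the fallback in your last sentence. Lemma~\ref{spectral} as stated only guarantees that \emph{some} subbasis of $X$ has nonempty limit sets. So getting $x$ from the subbasis of all quasi-compact opens needs an extra step: write each quasi-compact open as a finite union of finite intersections of members of that subbasis. Your main argument does not rely on this.
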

\bigskip

\begin{definition}
    We endow the set of all $r$-ideals $\mathcal{I}_r(H)$ with a topology whose subbasic open sets are of the form $$\mathbf{U}_r(x):=\{I\in \mathcal{I}_r(H): x\notin I\}$$ for all $x\in H$. This topology is called \textit{the Zariski topology} on $\mathcal{I}_r(H)$.
\end{definition}
The induced topology on $r\textup{-spec}(H)\subset \mathcal{I}_r(H)$ is  the standard Zariski topology on $r\textup{-spec}(H)$.
\bigskip
\begin{theorem}\label{pronconst}
Let $H$ be a monoid and let $r$ be a finitary ideal system on $H$. 
Then $\mathcal{I}_r(H)$ is a spectral space and $r\textup{-spec}(H)$ is proconstructible in $\mathcal{I}_r(H)$. 
In particular, $r\textup{-spec}(H)$ is a spectral space.
\end{theorem}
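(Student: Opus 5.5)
The plan is to apply the ultrafilter criterion (Lemma~\ref{spectral}) to $\mathcal{I}_r(H)$ with the subbasis $\mathcal{S}=\{\mathbf{U}_r(x): x\in H\}$, and then to prove proconstructibility of $r\textup{-spec}(H)$ by the same limit construction.

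First, $T_0$ is immediate: if $I\neq J$ are $r$-ideals, pick $x$ in one but not the other, say $x\in J\setminus I$; then $I\in\mathbf{U}_r(x)$ and $J\notin\mathbf{U}_r(x)$.

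For the ultrafilter condition, let $\mathscr{U}$ be an ultrafilter on $\mathcal{I}_r(H)$ and set
\[
I_{\mathscr{U}}:=\{x\in H:\ \mathbf{U}_r(x)\notin\mathscr{U}\}=\{x\in H:\ \{I: x\in I\}\in\mathscr{U}\},
\]
where the equality uses Lemma~\ref{ultrafilterproperty}(3). By construction $I_{\mathscr{U}}\in\mathbf{U}_r(x)$ if and only if $\mathbf{U}_r(x)\in\mathscr{U}$. So the only thing to check is that $I_{\mathscr{U}}$ is an $r$-ideal, i.e.\ $(I_{\mathscr{U}})_r\subseteq I_{\mathscr{U}}$, and this is the main step, where finitarity is used. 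Let $y\in (I_{\mathscr{U}})_r$. Since $r$ is finitary, there is a finite $E=\{x_1,\dots,x_n\}\subseteq I_{\mathscr{U}}$ with $y\in E_r$ (if $E=\emptyset$ then $y\in\emptyset_r$, which lies in every $r$-ideal by \textbf{(Id2)}, so $\{I: y\in I\}$ is the whole space). Each $\{I: x_i\in I\}$ lies in $\mathscr{U}$, so their finite intersection does too. Every $r$-ideal $I$ in this intersection contains $E$, hence $E_r\subseteq I$ by \textbf{(Id2)} and so $y\in I$. Thus $\{I: y\in I\}\in\mathscr{U}$ and $y\in I_{\mathscr{U}}$. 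It follows that $I_{\mathscr{U}}=(I_{\mathscr{U}})_r$ is an $r$-ideal lying in the limit set, and Lemma~\ref{spectral} gives that $\mathcal{I}_r(H)$ is spectral.

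For proconstructibility I would use the standard ultrafilter characterization from \cite{Ca14}: a subset $Y$ of a spectral space is proconstructible iff, for every ultrafilter $\mathscr{U}$ on $Y$, the limit point built from the subbasis lies in $Y$. Concretely, given an ultrafilter $\mathscr{U}$ on $r\textup{-spec}(H)$, extend it to $\mathcal{I}_r(H)$ and show $I_{\mathscr{U}}$ is prime. For this, first note $1\notin I_{\mathscr{U}}$, since no prime contains $1$. Next, if $xy\in I_{\mathscr{U}}$, then
\[
\{P: xy\in P\}=\{P: x\in P\}\cup\{P: y\in P\}
\]
for primes $P$. By Lemma~\ref{ultrafilterproperty}(2) one of the two sets is in $\mathscr{U}$, so $x$ or $y$ lies in $I_{\mathscr{U}}$. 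Hence $H\setminus I_{\mathscr{U}}$ is multiplicatively closed and $I_{\mathscr{U}}\in r\textup{-spec}(H)$.

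The conclusion that $r\textup{-spec}(H)$ is spectral then follows from Lemma~\ref{proconstr}. The delicate point is justifying the ultrafilter characterization of proconstructible sets relative to a subbasis. If that is not available, I would instead argue directly that $r\textup{-spec}(H)$ is closed in the constructible topology, using the quasi-compact opens generated by the $\mathbf{U}_r(x)$.
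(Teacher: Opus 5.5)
Your proof is correct. The $T_0$ check and the limit point $I_{\mathscr{U}}=\{x\in H:\mathbf{U}_r(x)\notin\mathscr{U}\}$, shown to be an $r$-ideal via finitarity and \textbf{(Id2)}, are the same as the paper's argument. You handle proconstructibility differently.

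The paper writes $\mathcal{I}_r(H)\setminus r\textup{-spec}(H)$ as the union of the sets $O_{a,b}=\mathbf{U}_r(a)\cap\mathbf{U}_r(b)\cap(\mathcal{I}_r(H)\setminus\mathbf{U}_r(ab))$. These are basic constructible opens because each $\mathbf{U}_r(x)$ is quasi-compact. You instead show that the subbasic limit of every ultrafilter on $r\textup{-spec}(H)$ is prime.

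The characterization you flag as delicate causes no trouble here:
\begin{itemize}
\item every quasi-compact open is a finite union of finite intersections of sets $\mathbf{U}_r(x)$, and ultrafilters respect finite unions and intersections, so $I_{\mathscr{U}}$ also satisfies the limit condition with respect to all quasi-compact opens;
\item by $T_0$ and the compactness of the Hausdorff constructible topology, the only such point is the constructible limit of $\mathscr{U}$;
\item the constructible closure of $r\textup{-spec}(H)$ consists exactly of such limits.
\end{itemize}

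The paper's route stays closer to the definition of the constructible topology. Yours reuses a single construction throughout and gives spectrality of $r\textup{-spec}(H)$ directly from Lemma~\ref{spectral}, without Lemma~\ref{proconstr}. Your check $1\notin I_{\mathscr{U}}$ also takes care of the improper ideal $H$. With the convention that primes are proper, $H$ is a non-prime $r$-ideal lying in no $O_{a,b}$. So the paper's decomposition tacitly also needs the constructible open set $\{H\}=\mathcal{I}_r(H)\setminus\mathbf{U}_r(1)$.

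One small point to make explicit: the identity $\{P:xy\in P\}=\{P:x\in P\}\cup\{P:y\in P\}$ uses that every $r$-ideal is an $s$-ideal. This follows from \textbf{(Id4)} and \textbf{(Id2)}.
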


\begin{proof}
We first show that $\mathcal{I}_r(H)$ satisfies the $T_0$ separation axiom. 
Let $I,J \in \mathcal{I}_r(H)$ with $I \neq J$. Then there exists $x \in H$ such that either $x \in I \setminus J$ or $x \in J \setminus I$. 
Without loss of generality, assume $x \in I \setminus J$. Then
$I \notin \mathbf{U}_r(x)$ and $J \in \mathbf{U}_r(x),$
  so $I$ and $J$ are topologically distinguishable. Hence $\mathcal{I}_r(H)$ is $T_0$.

Let $\mathcal{S} := \{\mathbf{U}_r(x) \mid x \in H\}$, which forms a subbasis for the Zariski topology on $\mathcal{I}_r(H)$. 
Let $\mathscr{U}$ be an ultrafilter on $\mathcal{I}_r(H)$ and define
\[
I_{\mathscr{U}}
:=
\{\, y \in H \mid \mathbf{U}_r(y) \notin \mathscr{U} \,\}.
\]

We claim that $I_{\mathscr{U}}$ is an $r$-ideal and that 
$I_{\mathscr{U}} \in ( \mathcal{I}_r(H) )_{\mathcal{S}}(\mathscr{U})$. To show that $I_{\mathscr{U}}$ is an $r$-ideal, it suffices to prove that
$I_{\mathscr{U}} = (I_{\mathscr{U}})_r$. 
The inclusion $I_{\mathscr{U}} \subseteq (I_{\mathscr{U}})_r$ is clear. 
For the reverse inclusion, let $x \in (I_{\mathscr{U}})_r$. 
Since $r$ is finitary,
\[
(I_{\mathscr{U}})_r 
=
\bigcup_{E \in \mathcal{P}_{\mathrm{fin}}(I_{\mathscr{U}})} E_r.
\]
Thus there exists $E = \{y_1,\dots,y_n\} \subseteq I_{\mathscr{U}}$ such that $x \in E_r$.

For each $i$, we have $\mathbf{U}_r(y_i) \notin \mathscr{U}$, and hence
\[
\mathcal{I}_r(H) \setminus \mathbf{U}_r(y_i) \in \mathscr{U}.
\]
By Lemma~\ref{ultrafilterproperty},
\[
\bigcap_{i=1}^n 
\bigl( \mathcal{I}_r(H) \setminus \mathbf{U}_r(y_i) \bigr)
\in \mathscr{U}.
\]
Let $J$ be an $r$-ideal in this intersection. Then $y_i \in J$ for all $i$, so $E \subseteq J$. 
Since $r$ is monotone and idempotent \cite[Proposition, p.~16]{h-c98}, we obtain
\[
E_r \subseteq J_r = J.
\]
In particular, $x \in J$, so $J \in \mathcal{I}_r(H) \setminus \mathbf{U}_r(x)$. Therefore,
\[
\bigcap_{i=1}^n 
\bigl( \mathcal{I}_r(H) \setminus \mathbf{U}_r(y_i) \bigr)
\subseteq 
\mathcal{I}_r(H) \setminus \mathbf{U}_r(x).
\]
Applying Lemma~\ref{ultrafilterproperty} again, we conclude that
$\mathbf{U}_r(x) \notin \mathscr{U}$, that is, $x \in I_{\mathscr{U}}$. 
Hence $(I_{\mathscr{U}})_r \subseteq I_{\mathscr{U}}$, and thus $I_{\mathscr{U}}$ is an $r$-ideal.

The inclusion 
$I_{\mathscr{U}} \in ( \mathcal{I}_r(H) )_{\mathcal{S}}(\mathscr{U})$ 
follows directly from the definition of $I_{\mathscr{U}}$. 
Therefore, by Lemma~\ref{spectral}, the space $\mathcal{I}_r(H)$ endowed with the Zariski topology is spectral.

Next, let $a,b \in H$ and set
\[
O_{a,b}
:=
\mathbf{U}_r(a) \cap \mathbf{U}_r(b)
\cap
\bigl( \mathcal{I}_r(H) \setminus \mathbf{U}_r(ab) \bigr).
\]
By \cite[Corollary~1.2]{fin-fon-lop13}, each $\mathbf{U}_r(x)$ is quasi-compact  in $\mathcal{I}_r(H)$. 
Hence $O_{a,b}$ is open in the constructible topology.

Moreover, since an $r$-ideal $I$ is not prime if and only if there exist $a,b\in H$ such that $ab\in I$ and $a,b\notin I$, we obtain
\[
\mathcal{I}_r(H) \setminus r\textup{-spec}(H)
=
\bigcup_{a,b \in H} O_{a,b},
\]
which shows that $r\textup{-spec}(H)$ is proconstructible in $\mathcal{I}_r(H)$. 
Finally, Lemma~\ref{proconstr} implies that $r\textup{-spec}(H)$ is a spectral space.
\end{proof}
\bigskip

\begin{remark}
The ring-theoretic versions of Theorem~\ref{Zar} and Theorem~\ref{prüfer} 
have already been established. 
See \cite{fin-fon-lop13} for a constructive proof of the ring case of Theorem~\ref{Zar}, 
and \cite[Proposition 2.2]{dobb87} for the ring case of Theorem~\ref{prüfer}. 
Moreover, in \cite{ju12}, J.~Juett proved that $r\textup{-spec}(H)$ is a spectral space 
by applying Hochster's characterization (Lemma~\ref{Hochster}).
\end{remark}

\bigskip
\section{The Zariski topology on the set of all $H$-module systems}\label{newtoponX}

In this section, we introduce a new topology on the set of all the generalized  $H$-modules system, inspired by \cite{Ca-Da14}, which we call the \textit{Zariski topology}. We show that this topological space is  spectral  using the ultrafilter criterion. Moreover, we also show that the set of all finitary generalized  $H$-module systems with induced topology is  a spectral space.

\begin{definition}\label{defnzartop}
    Let $\mathcal{X}$ be the set of all the generalized  $H$-module systems on $G$. The \textit{Zariski topology} on $\mathcal{X}$ is the topology for which a subbasis
of open sets is the collection of all  sets of the form $$U_S:=\{r\in \mathcal{X}: 1\in S_r\},$$ as $S$ ranges among  the all nonzero subsets of $G$.
\end{definition}
We denote by $\mathcal{X}_\textup{fin}$  the set of all finitary generalized  $H$-module systems. It is endowed with the subspace topology.

\bigskip

\begin{theorem}\label{main1}
Let $H$ be a monoid and let $G$ be its quotient groupoid. Then the space
$\mathcal{X}$, endowed with the Zariski topology, is a spectral space.
\end{theorem}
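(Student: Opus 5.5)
We apply the ultrafilter criterion (Lemma~\ref{spectral}) to $\mathcal{X}$ with the subbasis $\mathcal{S}:=\{U_S : S\subseteq G,\ S\neq\emptyset\}$ of Definition~\ref{defnzartop}, as in the proofs of Theorems~\ref{Zar} and~\ref{pronconst}. The plan has two steps: check that $\mathcal{X}$ is $T_0$, then build from each ultrafilter a point in its limit set. The one identity used throughout is Axiom \textbf{(Id3)} for nonzero $x\in G$: for every $r\in\mathcal{X}$,
\[
x\in A_r \iff 1\in x^{-1}A_r=(x^{-1}A)_r \iff r\in U_{x^{-1}A}.
\]

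\emph{$T_0$.} Let $r\neq t$ in $\mathcal{X}$. Choose $A\subseteq G$ and $x\in A_r\setminus A_t$ (up to symmetry). Since $0\in A_t$ by \textbf{(Id1)}, we have $x\neq 0$, and $A\neq\emptyset$. By the identity above, $r\in U_{x^{-1}A}$ and $t\notin U_{x^{-1}A}$, so $U_{x^{-1}A}$ separates $r$ and $t$.

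\emph{The limit point.} Let $\mathscr{U}$ be an ultrafilter on $\mathcal{X}$. Guided by the identity above, define $r_{\mathscr{U}}$ by
\[
A_{r_{\mathscr{U}}}:=\{0\}\cup\{x\in G^{\bullet} : U_{x^{-1}A}\in\mathscr{U}\}\qquad (A\subseteq G).
\]
We then verify that $r_{\mathscr{U}}$ is a generalized $H$-module system.
\begin{enumerate}
\item \textbf{(Id1)}: if $0\neq a\in A$, then $1\in a^{-1}A\subseteq(a^{-1}A)_r$ for every $r$, so $U_{a^{-1}A}=\mathcal{X}\in\mathscr{U}$.
\item \textbf{(M2)}: if $A\subseteq B$, then $U_{x^{-1}A}\subseteq U_{x^{-1}B}$ by \textbf{(M2)} for each $r$, and filters are upward closed.
\item \textbf{(Id3)} for $c\neq0$: $x\in(cA)_{r_{\mathscr{U}}}$ iff $U_{(c^{-1}x)^{-1}A}\in\mathscr{U}$ iff $c^{-1}x\in A_{r_{\mathscr{U}}}$.
\item \textbf{(M4)}: for $h\in H^{\bullet}$ and $x\in A_{r_{\mathscr{U}}}$ nonzero, we claim $U_{x^{-1}A}\subseteq U_{(hx)^{-1}A}$. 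Indeed, $1\in(x^{-1}A)_r$ gives $h\in(x^{-1}A)_r$ by \textbf{(M4)}, i.e.\ $1\in h^{-1}(x^{-1}A)_r=((hx)^{-1}A)_r$.
\end{enumerate}
Finally, by construction $r_{\mathscr{U}}\in U_S$ iff $1\in S_{r_{\mathscr{U}}}$ iff $U_S\in\mathscr{U}$. Hence $r_{\mathscr{U}}\in\mathcal{X}_{\mathcal{S}}(\mathscr{U})$, and Lemma~\ref{spectral} gives that $\mathcal{X}$ is spectral.

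The main obstacle is the degenerate cases rather than the ultrafilter argument: \textbf{(Id3)} with $c=0$, and the value at $A=\emptyset$ (and at $\{0\}$, cf.\ example~\eqref{example16}). We would settle these first.
\begin{enumerate}
\item Applying \textbf{(Id3)} with $c=0$ to an arbitrary $r$ shows that $0\cdot A_r=(0\cdot A)_r$. This forces $\emptyset_r$ to be uniformly determined (it lies in $\{0\}$), and pins down the behaviour of $\{0\}_r$ in terms of $0\cdot G$.
\item Since these constraints are uniform in $r$, the sets $U_S$ with $S\subseteq\{0\}$ are either all of $\mathcal{X}$ or empty, independently of $\mathscr{U}$.
\item Using this, we would define $r_{\mathscr{U}}$ on these few exceptional sets exactly as every $r\in\mathcal{X}$ is forced to act there.
\end{enumerate}
If the intended reading of \textbf{(Id3)} is only for $c\in G^{\bullet}$, these cases disappear.
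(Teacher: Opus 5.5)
Your proposal is correct and follows the paper's proof: same subbasis, same $T_0$ argument via \textbf{(Id3)}, and the same limit point. Your $A_{r_{\mathscr{U}}}$ is the paper's $\{g\in G : U_{A,g}\in\mathscr{U}\}$, because $U_{A,x}=U_{x^{-1}A}$ for $x\neq 0$ and $U_{A,0}=\mathcal{X}$; your treatment of $c=0$, which forces $\emptyset_r=\{0\}_r=\{0\}$, also covers a case the paper's \textbf{(Id3)} check skips by dividing by $c$.

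One caveat: your closing remark is wrong. If \textbf{(Id3)} is required only for $c\in G^{\bullet}$, then $\emptyset_r$ may equal $G$; for example, $A\mapsto G$ differs only at $\emptyset$ from the system that is $G$ on nonempty sets and $\{0\}$ on $\emptyset$. So your claim $A\neq\emptyset$ in the $T_0$ step fails, and $U_\emptyset$ must stay in the subbasis, as it does in the paper's $\{U_S : S\subseteq G\}$.
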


\begin{proof}
Let $\mathscr{U}$ be an ultrafilter on $\mathcal{X}$, and let
\[
\mathcal{S}:=\{U_S : S\subseteq G\}
\]
be the canonical subbasis of the Zariski topology. By Lemma~\ref{spectral}, it
suffices to show that $\mathcal{X}$ is $T_0$ and that the set
\[
\mathcal{X}(\mathscr{U})
:=
\{r\in \mathcal{X} : \forall U_S\in \mathcal{S},\ r\in U_S \iff U_S\in \mathscr{U}\}
\]
is nonempty.

We first show that $\mathcal{X}$ is $T_0$. Let $r_1\neq r_2$ be elements of
$\mathcal{X}$. Then there exist a subset $S\subseteq G$ and an element
$g\in G^{\bullet}$ such that $g\in S_{r_1}$ and $g\notin S_{r_2}$. By axiom~(3),
for any $r\in \mathcal{X}$ we have
\[
g\in S_r \iff 1\in (g^{-1}S)_r.
\]
Hence $r_1\in U_{g^{-1}S}$ while $r_2\notin U_{g^{-1}S}$, which proves that
$\mathcal{X}$ is $T_0$.

For each subset $S\subseteq G$ and element $g\in G$, set
\[
U_{S,g}:=\{r\in \mathcal{X}: g\in S_r\}.
\]
Define a map
\[
r_{\mathscr{U}}:\mathcal{P}(G)\longrightarrow\mathcal{P}(G),
\qquad
S\longmapsto \{g\in G : U_{S,g}\in \mathscr{U}\}.
\]
We claim that $r_{\mathscr{U}}$ is a generalized  $H$-module system on $G$, and that
$r_{\mathscr{U}}\in \mathcal{X}(\mathscr{U})$.

\medskip
\noindent
\textbf{(Id1)} Let $A\subseteq G$ and $a\in A\cup\{0\}$. Since
$A\cup\{0\}\subseteq A_r$ for all $r\in\mathcal{X}$, we have $a\in A_r$ for all
$r$. Thus $U_{A,a}=\mathcal{X}\in \mathscr{U}$, and hence
$a\in A_{r_{\mathscr{U}}}$. Therefore,
$A\cup\{0\}\subseteq A_{r_{\mathscr{U}}}$.

\medskip
\noindent
\textbf{(M2)} Let $A\subseteq B\subseteq G$ and $g\in A_{r_{\mathscr{U}}}$. Then
$U_{A,g}\in \mathscr{U}$. Since $A_r\subseteq B_r$ for all $r\in\mathcal{X}$, we
have $U_{A,g}\subseteq U_{B,g}$. By upward closure of $\mathscr{U}$,
$U_{B,g}\in \mathscr{U}$, and hence $g\in B_{r_{\mathscr{U}}}$. Thus
$A_{r_{\mathscr{U}}}\subseteq B_{r_{\mathscr{U}}}$.

\medskip
\noindent
\textbf{(Id3)} Let $A\subseteq G$ and $c,g\in G$. By definition,
\[
g\in cA_{r_{\mathscr{U}}}
\iff U_{A,c^{-1}g}\in \mathscr{U}.
\]
On the other hand,
\[
U_{A,c^{-1}g}
=
\{r\in \mathcal{X}: c^{-1}g\in A_r\}
=
\{r\in \mathcal{X}: g\in (cA)_r\}
=
U_{cA,g}.
\]
Thus $U_{cA,g}\in \mathscr{U}$ if and only if $g\in (cA)_{r_{\mathscr{U}}}$,
showing that
\[
cA_{r_{\mathscr{U}}}=(cA)_{r_{\mathscr{U}}}.
\]

\medskip
\noindent
\textbf{(M4)} Let $g\in HA_{r_{\mathscr{U}}}$. Then $g=ha$ for some $h\in H$ and
$a\in A_{r_{\mathscr{U}}}$. Since $a\in A_{r_{\mathscr{U}}}$, we have
$U_{A,a}\in \mathscr{U}$. For every $r\in\mathcal{X}$, the equality
$HA_r=A_r$ holds, so $a\in A_r$ implies $ha\in A_r$. Thus
$U_{A,a}\subseteq U_{A,g}$, and by upward closure,
$U_{A,g}\in \mathscr{U}$. Hence $g\in A_{r_{\mathscr{U}}}$.

\medskip
These verifications show that $r_{\mathscr{U}}$ is a generalized  $H$-module system on $G$.
Finally, by construction, for every subbasic open set $U_S$ we have
\[
r_{\mathscr{U}}\in U_S \iff U_S\in \mathscr{U},
\]
which implies that $r_{\mathscr{U}}\in \mathcal{X}(\mathscr{U})$.

Therefore, $\mathcal{X}(\mathscr{U})$ is nonempty, and by
Lemma~\ref{spectral}, the space $\mathcal{X}$ is spectral.
\end{proof}

\bigskip
\begin{remark}
A generalized  $H$-module system $r$ on $G$ satisfies Axiom \textbf{Id2} if and only if it is idempotent, that is,
\[
(A_r)_r = A_r \hspace{0.3cm} \text{for all } A \subseteq G.
\]
We claim that, in general, $r_{\mathscr{U}}$ need not be idempotent. 
Let $r$ be as in Example~\ref{example16}, and let $\mathscr{U}$ be the principal ultrafilter generated by $r$. Then, for every $S \subseteq G$, we have
\[
S_{r_{\mathscr{U}}}
=
\{\, g \in G :U_{S,g} \in \mathscr{U} \,\}
=
\{\, g \in G : r \in U_{S,g} \,\}
=
S_r.
\]
Hence $r_{\mathscr{U}} = r$. Since
\[
(\{1\}_r)_r = G \neq \{1\}_r=H,
\]
it follows that $r$ is not idempotent. Therefore, $r_{\mathscr{U}}$ does not satisfy Axiom \textbf{Id2}.
\end{remark}
\medskip

Let $\mathcal{M}_\textup{fin}$ denote the set of all finitary $H$-module systems,  endowed with the topology described in Definition \ref{defnzartop}. Using the same argument as in the proof of Theorem \ref{main1}, one can show that $\mathcal{M}_\textup{fin}$ is also spectral. In this case, the ultrafilter limit point is defined as follows:

\[r_\mathscr{U}:\left\{
\begin{array}{rcl}
     \mathcal{P}(G) & \longrightarrow & \mathcal{P}(G) \\
    S & \longmapsto & \{g\in G : \exists U\in\mathcal{P}_\textup{fin}(G) \textup{ such that }U_{S,g}\in \mathscr{U}\},
\end{array}
\right.\]
where $\mathscr{U}$ is an ultrafilter on $\mathcal{M}_\textup{fin}$.
\medskip

We proceed by presenting  lemmas and  propositions to show that $\mathcal{X}_\textup{fin}$ is also a spectral space.

\medskip
\begin{lemma}[\cite{hoch69} Theorem 1 and Proposition 4]\label{hausd}
    Let $X$ be a spectral space. The constructible topology is Hausdorff, totally disconnected, and quasi-compact.
\end{lemma}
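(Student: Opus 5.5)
We have to prove Hochster's lemma, Lemma~\ref{hausd}: if $X$ is a spectral space, its constructible topology is Hausdorff, totally disconnected and quasi-compact. We use the ultrafilter description from Lemma~\ref{spectral} together with standard facts on spectral spaces. These facts are that the quasi-compact opens form a basis closed under finite intersections, and that $X$ is $T_0$.

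\textbf{Step 1: clopen basis.} Let $U,V$ be quasi-compact opens. Then $U$ and $X\setminus V$ are both constructible sets. The complement of $U\cap(X\setminus V)$ is
\[
(X\setminus U)\cup V=(X\setminus U)\cup\bigl(V\cap(X\setminus X')\bigr)\quad\text{with }X'=\emptyset,
\]
which is a finite union of basic constructible sets. Hence every basic set in $\mathcal{B}$ is clopen in the constructible topology, and that topology has a basis of clopen sets.

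\textbf{Step 2: Hausdorff and totally disconnected.} Take $x\neq y$. Since $X$ is $T_0$ and the quasi-compact opens form a basis, there is a quasi-compact open $U$ containing exactly one of $x,y$. Then $U$ and $X\setminus U$ are disjoint constructible-open sets separating $x$ and $y$, so the constructible topology is Hausdorff. The same clopen set $U$ shows that no connected subset can contain both $x$ and $y$, so the space is totally disconnected.

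\textbf{Step 3: quasi-compactness.} This is the main obstacle. By Alexander's subbasis theorem it suffices to show that every ultrafilter $\mathscr{U}$ on $X$ converges in the constructible topology. The constructible topology has the subbasis $\mathcal{C}=\{U, X\setminus U : U \text{ quasi-compact open}\}$, so it suffices to find a point $x$ such that, for every quasi-compact open $U$,
\[
x\in U \iff U\in\mathscr{U}.
\]
Indeed, the same condition then also holds for the complements $X\setminus U$, since $\mathscr{U}$ is an ultrafilter.

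\textbf{Step 4: producing the point.} Apply Lemma~\ref{spectral} with the subbasis $\mathcal{S}$ of all quasi-compact opens. For a spectral space this is a legitimate choice: the ultrafilter criterion of \cite{Ca14} holds for any subbasis of quasi-compact opens. This delicate point would have to be checked against \cite{Ca14}. Alternatively, one can take the prime-spectrum model $X=\operatorname{Spec}R$ and argue directly. Set
\[
\mathfrak p=\{f\in R: \mathbf{V}(f)\in\mathscr{U}\}.
\]
By Lemma~\ref{ultrafilterproperty}, $\mathfrak p$ is a prime ideal, as in the example preceding Lemma~\ref{spectral}. Moreover, $\mathfrak{p}$ lies in a quasi-compact open $\mathbf{D}(f_1,\dots,f_n)$ exactly when that set is in $\mathscr{U}$, because $\mathbf{D}(f_1,\dots,f_n)=\bigcup_i\mathbf{D}(f_i)$ and ultrafilters are prime with respect to finite unions. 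Thus $\mathscr{U}$ converges to $\mathfrak p$ in the constructible topology, and quasi-compactness follows.
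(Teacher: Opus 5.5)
Your proof is correct. The paper gives no argument for this lemma; it just quotes it from \cite{hoch69}. So what you have written is a self-contained replacement rather than a variant of the paper's proof.

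Its advantage is that it uses the same ultrafilter language as Lemma~\ref{spectral} and the rest of the paper. It also exhibits the constructible limit of an ultrafilter explicitly as the prime $\mathfrak p=\{f\in R:\mathbf V(f)\in\mathscr U\}$. Its cost is that compactness is obtained inside a chosen ring model, not directly from the intrinsic axioms of Lemma~\ref{Hochster}. The reduction to $\operatorname{Spec}R$ is legitimate. The paper defines spectral spaces up to homeomorphism, and the constructible topology is defined purely from the quasi-compact open sets, which homeomorphisms preserve.

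Two things to clean up.

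First, delete the initial option in Step~4. Lemma~\ref{spectral} only says that \emph{some} subbasis satisfies the ultrafilter criterion. The assertion that the family of all quasi-compact opens does is essentially equivalent to the compactness you are proving. Your Step~3 derives compactness from it, and conversely patch compactness yields it by a finite-intersection argument. Quoting it would therefore be circular. Your $\operatorname{Spec}R$ computation proves exactly this assertion, and in fact for every quasi-compact open, so nothing is lost.

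Second, some small details:
\begin{itemize}
\item In Step~1, $X\setminus U=X\cap(X\setminus U)$ is a basic set because $X$ itself is quasi-compact. Your display there is garbled but harmless.
\item For rings, the monoid example needs the extra inclusion $\mathbf V(f)\cap\mathbf V(g)\subseteq\mathbf V(f+g)$ to make $\mathfrak p$ closed under addition.
\item $1\notin\mathfrak p$ uses $\emptyset\notin\mathscr U$.
\item The equivalence $\mathfrak p\in\mathbf D(f)\iff\mathbf D(f)\in\mathscr U$ comes from Lemma~\ref{ultrafilterproperty}(3) applied to $\mathbf D(f)=X\setminus\mathbf V(f)$.
\item Step~3 does not use Alexander's subbasis theorem. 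It uses the ultrafilter characterization of quasi-compactness, together with the fact that a filter containing every subbasic neighbourhood of $x$ converges to $x$.
\end{itemize}
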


\medskip
\begin{lemma}[The Stack Project, Tag 08ZP]\label{alex} Let $(X, \tau)$ be a topological space, and $\mathcal{S}$ be a subbasis of $\tau$. Then $X$ is quasi-compact if and only if every cover by elements from $\mathcal{S}$ has a finite subcover.   
    \end{lemma}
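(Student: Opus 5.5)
This is the Alexander subbasis theorem: to check quasi-compactness it suffices to extract finite subcovers from covers by subbasic open sets. I would argue by contradiction using Zorn's lemma. Suppose $X$ is not quasi-compact but every cover of $X$ by members of $\mathcal{S}$ has a finite subcover.

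First, I would choose a maximal open cover with no finite subcover. Consider the family $\Phi$ of all open covers of $X$ (by arbitrary open sets of $\tau$) that admit no finite subcover. By assumption $\Phi\neq\emptyset$, and we order it by inclusion. The union of a chain in $\Phi$ is still an open cover. It also has no finite subcover: a finite subfamily of the union lies in a single member of the chain, which has no finite subcover. So Zorn's lemma gives a maximal element $\mathcal{C}\in\Phi$. The key property of maximality is this: for any open $V\notin\mathcal{C}$, the cover $\mathcal{C}\cup\{V\}$ has a finite subcover. Hence $X=V\cup C_1\cup\dots\cup C_n$ for some $C_i\in\mathcal{C}$.

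Second, I would show that $\mathcal{C}\cap\mathcal{S}$ already covers $X$. This is the main step. Let $x\in X$ and pick $C\in\mathcal{C}$ with $x\in C$. Since $\mathcal{S}$ is a subbasis, there are $S_1,\dots,S_k\in\mathcal{S}$ with $x\in S_1\cap\dots\cap S_k\subseteq C$. I claim some $S_j$ lies in $\mathcal{C}$. Suppose none does. Then by maximality, for each $j$ we have $X=S_j\cup D_j$, where $D_j$ is a finite union of members of $\mathcal{C}$. It follows that
\[
X=(S_1\cap\dots\cap S_k)\cup D_1\cup\dots\cup D_k\subseteq C\cup D_1\cup\dots\cup D_k,
\]
which is a finite subcover from $\mathcal{C}$. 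This is a contradiction. Hence every $x$ lies in some member of $\mathcal{C}\cap\mathcal{S}$.

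Finally, $\mathcal{C}\cap\mathcal{S}$ is a cover of $X$ by subbasic sets, so by hypothesis it has a finite subcover. That finite subcover is also a finite subcover of $\mathcal{C}$, contradicting $\mathcal{C}\in\Phi$. Therefore $X$ is quasi-compact. The converse direction is trivial, since subbasic sets are open. The only delicate point is the distributivity step in the second paragraph; the rest is a routine Zorn argument. (If $k=0$, i.e.\ $X$ itself is a basic set, then $X\subseteq C$ and $\{C\}$ is already a finite subcover, which is again a contradiction.)
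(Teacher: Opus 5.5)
Your proof is correct: it is the standard Zorn's-lemma proof of the Alexander subbasis theorem. You take a maximal open cover with no finite subcover and show that its subbasic members already cover $X$, using $X=\bigcap_j (S_j\cup D_j)\subseteq (S_1\cap\dots\cap S_k)\cup D_1\cup\dots\cup D_k$. This is essentially the argument of the Stacks Project tag the paper cites; the paper gives no proof of its own for this lemma.
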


\bigskip
\begin{proposition}\label{quasi-compactt}
Let $A\subseteq G$ and $x\in G$. Then the subspace
\[
U_{A,x}
=
\{r\in \mathcal{X} : x\in A_r\}
\]
is quasi-compact in $\mathcal{X}$.
\end{proposition}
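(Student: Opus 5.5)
We show that $U_{A,x}$ is itself a spectral space, using the ultrafilter criterion of Lemma~\ref{spectral}, exactly as in the proof of Theorem~\ref{main1}. Since spectral spaces are quasi-compact, this gives the claim. An alternative route shows that $U_{A,x}$ is closed in the constructible topology of $\mathcal{X}$ and then invokes Lemma~\ref{hausd}. The first route is the most self-contained, and we follow it.

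First, $U_{A,x}$ is $T_0$, because it is a subspace of the $T_0$ space $\mathcal{X}$. As subbasis of $U_{A,x}$ we take the traces $\mathcal{S}' := \{U_S\cap U_{A,x} : S\subseteq G\}$. Let $\mathscr{U}$ be an ultrafilter on $U_{A,x}$ and put
\[
\mathscr{U}^{*} := \{\mathcal{Y}\subseteq \mathcal{X} : \mathcal{Y}\cap U_{A,x}\in \mathscr{U}\}.
\]
This is an ultrafilter on $\mathcal{X}$, and it contains $U_{A,x}$. The construction in the proof of Theorem~\ref{main1} then produces the generalized $H$-module system $r_{\mathscr{U}^{*}}$, given by $S_{r_{\mathscr{U}^{*}}}=\{g\in G : U_{S,g}\in \mathscr{U}^{*}\}$.

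Second, we check that $r_{\mathscr{U}^{*}}\in U_{A,x}$. This holds because $U_{A,x}\in \mathscr{U}^{*}$, and by definition this means $x\in A_{r_{\mathscr{U}^{*}}}$.

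Third, we check the limit condition. For every $S$ we have $1\in S_{r_{\mathscr{U}^{*}}}$ if and only if $U_S\in\mathscr{U}^{*}$, which holds if and only if $U_S\cap U_{A,x}\in\mathscr{U}$. Hence $r_{\mathscr{U}^{*}}$ lies in the ultrafilter limit set $(U_{A,x})_{\mathcal{S}'}(\mathscr{U})$, so this set is nonempty. By Lemma~\ref{spectral}, $U_{A,x}$ is spectral and therefore quasi-compact.

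The only delicate point is to make sure the ultrafilter-limit construction stays inside $U_{A,x}$. This is exactly why the transported ultrafilter $\mathscr{U}^{*}$ is useful: its limit point is defined coordinatewise by membership of the sets $U_{S,g}$, and $U_{A,x}$ is one of those sets.

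As a backup, we can argue directly via Lemma~\ref{alex}. Suppose $U_{A,x}$ is covered by subbasic sets $U_{S_i}\cap U_{A,x}$ with no finite subcover. Then the sets $U_{A,x}\setminus U_{S_i}$, together with $U_{A,x}$, have the finite intersection property. Extend them to an ultrafilter and take its limit point $r_{\mathscr{U}^{*}}$ as above. This point lies in $U_{A,x}$ but in no $U_{S_i}$, which is a contradiction.
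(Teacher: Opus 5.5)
Your argument is correct, but it takes a different route from the paper.

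The paper uses no ultrafilters here. It notes that the infimum $\bigwedge(U_{A,x})$ still satisfies $x\in A_{\bigwedge(U_{A,x})}$, so it is the least element of $U_{A,x}$ for the pointwise order. Every subbasic set $U_{B,y}$ is upward closed for that order. Hence whichever member of a subbasic cover contains $\bigwedge(U_{A,x})$ already contains all of $U_{A,x}$, and Lemma~\ref{alex} finishes.

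You instead push an ultrafilter on $U_{A,x}$ forward to $\mathcal{X}$ and reuse the limit point $r_{\mathscr{U}^{*}}$ of Theorem~\ref{main1}. Your key observation is exactly right: the limit point lands in $U_{A,x}$ because $U_{A,x}$ is itself one of the sets $U_{S,g}$ defining it. Your backup argument is essentially a direct proof, in this instance, of the principle from \cite{fin-fon-dar16} quoted right after the paper's proof.

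The paper's route is more elementary, since it does not depend on Theorem~\ref{main1} or Lemma~\ref{spectral}. It also gives a sharper conclusion: all open sets of $\mathcal{X}$ are upward closed, so every open cover of $U_{A,x}$ has a one-element subcover. Your route yields the extra fact that $U_{A,x}$ is itself spectral. It also does not rely on $\mathcal{X}$ being closed under arbitrary infima, so it applies where no least element is available.

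Your aside on constructible closedness can be dropped. Done via quasi-compact opens it would be circular, and done via ultrafilters it is what your main argument already does.
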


\begin{proof}
By definition, the family of sets
\[
\mathcal{S}
=
\bigl\{U_{A,x} : A\subseteq G,\ x\in G\bigr\}
\]
forms a subbasis for the topology on $\mathcal{X}$. Hence, by
Lemma~\ref{alex}, it suffices to show that every open cover of $U_{A,x}$ by
subbasic open sets admits a finite subcover.

Suppose that
\begin{equation}\label{eq1}
U_{A,x}
\subseteq
\bigcup_{\lambda\in I} U_{A_\lambda,x_\lambda}
\end{equation}
is a cover by subbasic open sets. By construction,
$\bigwedge(U_{A,x})$ is the smallest generalized $H$-module system (with respect to
pointwise inclusion) such that
$x\in A_{\bigwedge(U_{A,x})}$; equivalently,
$\bigwedge(U_{A,x})\in U_{A,x}$. Therefore, the cover \eqref{eq1} implies that
there exists $\lambda_0\in I$ such that
\begin{equation}\label{eq2}
\bigwedge(U_{A,x})\in U_{A_{\lambda_0},x_{\lambda_0}}.
\end{equation}
We claim that
\[
U_{A,x}\subseteq U_{A_{\lambda_0},x_{\lambda_0}}.
\]
Let $r\in U_{A,x}$. From \eqref{eq2} we obtain
\[
x_{\lambda_0}\in (A_{\lambda_0})_{\bigwedge(U_{A,x})}
\subseteq (A_{\lambda_0})_{r},
\]
where the inclusion follows from the minimality of
$\bigwedge(U_{A,x})$. Hence $r\in U_{A_{\lambda_0},x_{\lambda_0}}$, proving the
claim.

Thus every subbasic open cover of $U_{A,x}$ admits a finite  subcover. Consequently, $U_{A,x}$ is quasi-compact.
\end{proof}

The quasi-compactness of $U_{A,x}$ can also be deduced from \cite[Corollary 1.2]{fin-fon-dar16}, which asserts that if a subbasis of a spectral space makes the ultrafilter criterion work, then its elements are quasi-compact open sets. From Theorem \ref{main1}, we know that the space $\mathcal{X}$ is  spectral and the canonical subbasis $$\mathcal{S}=\{U_{A,x}:A\subseteq G, x\in G\}$$ makes the ultrafilter criterion work. Therefore, each of  $U_{A,x}$ is quasi-compact.

\medskip
\begin{proposition}\label{patch-continuous}
The canonical map $\Phi:\mathcal{X}\longrightarrow\mathcal{X}$, defined by
$\Phi(r)=r_s$, is continuous with respect to the constructible topology.
\end{proposition}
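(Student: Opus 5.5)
To prove that $\Phi$ is continuous for the constructible topology, it suffices to show that the preimage of every subbasic constructible open set is constructible-open. By definition, the constructible topology on the spectral space $\mathcal{X}$ (Theorem~\ref{main1}) is generated by the quasi-compact opens and their complements. By Proposition~\ref{quasi-compactt}, the sets $U_{A,x}$ are quasi-compact open. Every quasi-compact open set is a finite union of finite intersections of the $U_{A,x}$. Hence the constructible topology is generated by the sets $U_{A,x}$ and $\mathcal{X}\setminus U_{A,x}$, for $A\subseteq G$ and $x\in G$. Since preimages commute with complements, the whole statement reduces to showing that $\Phi^{-1}(U_{A,x})$ is open in the constructible topology for every $A\subseteq G$ and $x\in G$.

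To compute this preimage we use the definition of the finitary closure, $A_{r_s}=\bigcup_{F\in\mathcal{P}_{\textup{fin}}(A)}F_r$. For every $r\in\mathcal{X}$ this gives
\[
\Phi(r)\in U_{A,x}\iff x\in A_{r_s}\iff \exists F\in\mathcal{P}_{\textup{fin}}(A) \text{ with } x\in F_r .
\]
Consequently
\[
\Phi^{-1}(U_{A,x})=\bigcup_{F\in\mathcal{P}_{\textup{fin}}(A)} U_{F,x}.
\]
This is a union of subbasic Zariski open sets, so it is open already in the Zariski topology. The constructible topology is finer than the Zariski topology, so it is in particular constructible-open. The complement $\mathcal{X}\setminus U_{A,x}$ then has preimage $\mathcal{X}\setminus\bigcup_F U_{F,x}=\bigcap_F(\mathcal{X}\setminus U_{F,x})$. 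Each $\mathcal{X}\setminus U_{F,x}$ is closed in the constructible topology, since it is also constructible-open and the constructible topology is generated by a Boolean algebra. So this preimage is an intersection of constructible-closed sets, which is not obviously open.

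The main obstacle is exactly this complement case. I would first try to avoid it by replacing the subbasis with one adapted to compactness. The constructible topology is compact Hausdorff (Lemma~\ref{hausd}). To get continuity it is enough to know that $\Phi^{-1}$ of constructible-closed sets is closed, and it suffices to check this on the generating closed sets. The constructible-closed sets are generated under finite unions and arbitrary intersections by the sets $U_{A,x}$ and $\mathcal{X}\setminus U_{A,x}$. For the closed generator $U_{A,x}$ the preimage is $\bigcup_F U_{F,x}$, and we need this to be constructible-closed. The key idea I would use is that $\Phi$ is monotone and $r_s\le r$. Even so, $\bigcup_F U_{F,x}$ is a possibly infinite union, and an infinite union of clopens need not be closed. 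So I expect to use a different route. I would show directly that $\Phi$ preserves ultrafilter limits: for any ultrafilter $\mathscr{U}$ on $\mathcal{X}$, the constructible limit is the point $r_{\mathscr{U}}$ of Theorem~\ref{main1}, and one checks that $(r_{\mathscr{U}})_s=r_{\Phi(\mathscr{U})}$.

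Since $\mathcal{X}$ with the constructible topology is compact Hausdorff, a map is continuous exactly when it sends each ultrafilter's limit to the limit of the image ultrafilter. The inclusion $(r_{\mathscr{U}})_s\subseteq r_{\Phi(\mathscr{U})}$ follows from the displayed preimage formula. The reverse inclusion requires showing that $\bigcup_F U_{F,x}\in\Phi(\mathscr{U})$ forces some single $U_{F,x}\in\mathscr{U}$. This is the delicate point, and the whole plan hinges on it.
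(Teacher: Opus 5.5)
Your diagnosis is correct, but the proposal does not prove the statement. The step you call delicate is left open, and it fails in general, so the proposition as stated is false and no argument can close it.

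Since each $U_{A,x}$ is quasi-compact open, constructible continuity of $\Phi$ forces $\Phi^{-1}(U_{A,x})=\bigcup_{F\in\mathcal{P}_{\textup{fin}}(A)}U_{F,x}$ to be constructibly clopen. Being Zariski open, it must then be quasi-compact. As the $U_{F,x}$ increase with $F$ by \textbf{(M2)}, this means $\Phi^{-1}(U_{A,x})=U_{F_0,x}$ for a single finite $F_0\subseteq A$. The paper's proof stops after noting that $\Phi^{-1}(U_{A,x})$ is open, which only gives continuity of $\Phi$ into the Zariski topology. It never treats $\Phi^{-1}(\mathcal{X}\setminus U_{A,x})$, which is exactly the hole you found.

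Here is a counterexample. Let $H$ be the free abelian monoid on generators $p_1,p_2,\dots$ with a zero adjoined, and let $S_i=H[p_i^{-1}]$. Let $\rho_i\in\mathcal{X}_{\textup{fin}}$ be given by $B_{\rho_i}=BS_i\cup\{0\}$. Take $A=\{p_1,p_2,\dots\}$ and $x=1$, and put $W:=\Phi^{-1}(U_{A,1})$. Let $F\subseteq A$ be finite and $p_i\notin F$. Then $1\in\{p_i\}S_i$ gives $\rho_i\in W$. On the other hand, every nonzero element of $FS_i$ has positive exponent at some $p_j\in F$, so $\rho_i\notin U_{F,1}$.

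Hence the family $\{W,\mathcal{X}_{\textup{fin}}\}\cup\{\mathcal{X}\setminus U_{F,1}:F\in\mathcal{P}_{\textup{fin}}(A)\}$ has the finite intersection property. Any ultrafilter $\mathscr{U}$ containing it has $W\in\mathscr{U}$ but no $U_{F,1}\in\mathscr{U}$. In your notation, $1\in A_{r_{\Phi(\mathscr{U})}}$ but $1\notin A_{(r_{\mathscr{U}})_s}$, so $\Phi$ does not preserve constructible limits. Equivalently, $W$ is open but not quasi-compact, so $\Phi^{-1}(\mathcal{X}\setminus U_{A,1})$ is not constructibly open although $\mathcal{X}\setminus U_{A,1}$ is.

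Moreover $\mathcal{X}_{\textup{fin}}\in\mathscr{U}$, while the constructible limit $r_{\mathscr{U}}$ satisfies $1\in A_{r_{\mathscr{U}}}\setminus\bigcup_{F\in\mathcal{P}_{\textup{fin}}(A)}F_{r_{\mathscr{U}}}$, so $r_{\mathscr{U}}$ is not finitary. Therefore the proconstructibility claim in Corollary~\ref{corpro} fails as well. What survives is the Zariski continuity of $\Phi$. Spectrality of $\mathcal{X}_{\textup{fin}}$ can still be obtained by applying the ultrafilter criterion to $\mathcal{X}_{\textup{fin}}$ directly, with the finitary limit $(r_{\mathscr{U}})_s$.
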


\begin{proof}
By Proposition~\ref{quasi-compactt}, each set
$U_{A,x}\in\mathcal{S}$ is quasi-compact and open.
Therefore, to establish continuity of $\Phi$ with respect to the constructible
topology, it suffices to show that the preimage of every such subbasic open set
is constructibly open. 
Let $A\subseteq G$ and $x\in G$.   We compute
\[
\Phi(r)\in U_{A,x}
\iff x\in A_{r_s}
\iff \exists F\in\mathcal{P}_{\mathrm{fin}}(A)\ \text{such that}\ x\in F_r.
\]
Hence,
\[ \Phi^{-1}(U_{A,x})
=
\bigcup_{F\in\mathcal{P}_{\mathrm{fin}}(A)} U_{F,x},\]
which is open. Therefore, $\Phi$ is continuous with respect to the constructible topology.
\end{proof}

\medskip

\begin{corollary}\label{corpro} The subspace $\mathcal{X}_{\textup{fin}}$ is proconstructible and, in particular, a spectral space.
\end{corollary}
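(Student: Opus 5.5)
The natural approach is to realize $\mathcal{X}_{\textup{fin}}$ as the set of fixed points of the map $\Phi$ of Proposition~\ref{patch-continuous}. A generalized $H$-module system $r$ is finitary exactly when $r=r_s=\Phi(r)$. Hence
\[
\mathcal{X}_{\textup{fin}}=\{r\in\mathcal{X} : \Phi(r)=r\}.
\]
It therefore suffices to show that this fixed-point set is closed in the constructible topology of $\mathcal{X}$. Lemma~\ref{proconstr} will then give that $\mathcal{X}_{\textup{fin}}$ is spectral. Here $\mathcal{X}$ is spectral by Theorem~\ref{main1}, so its constructible topology makes sense.

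For the closedness, I would use the standard argument that the equalizer of two continuous maps into a Hausdorff space is closed. Equip $\mathcal{X}$ with the constructible topology. By Lemma~\ref{hausd} this topology is Hausdorff. The identity map is continuous for it, and so is $\Phi$ by Proposition~\ref{patch-continuous}. The map $(\mathrm{id},\Phi)\colon\mathcal{X}\to\mathcal{X}\times\mathcal{X}$ is then continuous. The diagonal of $\mathcal{X}\times\mathcal{X}$ is closed because the constructible topology is Hausdorff. The preimage of the diagonal under $(\mathrm{id},\Phi)$ is exactly the fixed-point set, so it is constructibly closed, i.e.\ proconstructible.

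The one point I expect to need care is that Proposition~\ref{patch-continuous} indeed gives continuity of $\Phi$ from the constructible topology to the constructible topology. Its proof only shows that preimages of the quasi-compact opens $U_{A,x}$ are open. To cover all constructible opens, I would also check that complements $\mathcal{X}\setminus U_{A,x}$ have constructibly open preimages. This holds because
\[
\Phi^{-1}(\mathcal{X}\setminus U_{A,x})=\bigcap_{F\in\mathcal{P}_{\mathrm{fin}}(A)}(\mathcal{X}\setminus U_{F,x}).
\]
This is an intersection of constructibly closed sets, so it is constructibly closed, which at first sight is the wrong property.

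To bypass this, I would use the following direct description instead:
\[
\mathcal{X}\setminus\mathcal{X}_{\textup{fin}}=\bigcup_{A\subseteq G,\,x\in G}\Bigl(U_{A,x}\cap\bigcap_{F\in\mathcal{P}_{\mathrm{fin}}(A)}(\mathcal{X}\setminus U_{F,x})\Bigr).
\]
Here finitary means $A_r\subseteq A_{r_s}$ for all $A$, since the reverse inclusion $A_{r_s}\subseteq A_r$ always holds by (M2). This set is not obviously open either.

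So the cleaner route is to note that the constructible topology is compact Hausdorff by Lemma~\ref{hausd}. The image $\Phi(\mathcal{X})$ of a compact space under a continuous map is compact, hence closed. I would then show that $\Phi(\mathcal{X})=\mathcal{X}_{\textup{fin}}$, using that $(r_s)_s=r_s$: for finite $F$ one has $F_{r_s}=F_r$. This is the step I would rely on, with the continuity of $\Phi$ between constructible topologies taken from Proposition~\ref{patch-continuous} as the paper states it.
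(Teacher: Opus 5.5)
Your argument is the paper's own: $\mathcal{X}_{\textup{fin}}=\mathrm{Fix}(\Phi)$ is constructibly closed because the constructible topology is Hausdorff and $\Phi$ is continuous for it. Your fallback via compactness of $\Phi(\mathcal{X})=\mathcal{X}_{\textup{fin}}$ needs exactly the same continuity, so it bypasses nothing. The worry you raised is a genuine gap, and it cannot be repaired. Proposition~\ref{patch-continuous} only shows that $\Phi^{-1}(U_{A,x})=\bigcup_{F\in\mathcal{P}_{\mathrm{fin}}(A)}U_{F,x}$ is open. Since $U_{A,x}$ is quasi-compact open (Proposition~\ref{quasi-compactt}), it is constructibly clopen. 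Continuity of $\Phi$ for the constructible topology would therefore force this union to be constructibly closed; your set $\Phi^{-1}(\mathcal{X}\setminus U_{A,x})$ is exactly what would have to be constructibly open. The union would then be constructibly compact, hence quasi-compact in the Zariski topology. Since the family $\{U_{F,x}\}_F$ is directed by (M2), it would equal a single $U_{F_0,x}$ with $F_0\subseteq A$ finite. This fails in general.

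For a counterexample, let $H=(\mathbb{Z},\cdot)$ and $G=(\mathbb{Q},\cdot)$. For a prime $p$ put $S_p=\{p^{-m}h : m\geq 0,\ h\in\mathbb{Z}\}$ and $A_{r_p}:=S_pA\cup\{0\}$; each $r_p$ is a finitary generalized $H$-module system. Let $A$ be the set of primes. Then $1\in A_{r_p}$ for every $p$, while for finite $F\subseteq A$ one has $1\in F_{r_p}$ if and only if $p\in F$. Hence the open sets $U_{F,1}\cap\mathcal{X}_{\textup{fin}}$, for $F\in\mathcal{P}_{\mathrm{fin}}(A)$, cover $U_{A,1}\cap\mathcal{X}_{\textup{fin}}$ with no finite subcover. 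If $\mathcal{X}_{\textup{fin}}$ were proconstructible, $U_{A,1}\cap\mathcal{X}_{\textup{fin}}$ would be constructibly closed, hence constructibly compact, hence quasi-compact in the coarser Zariski topology, which is a contradiction. Alternatively, take a nonprincipal ultrafilter $\mathscr{U}$ containing $\{r_p\}$. The limit $r_{\mathscr{U}}$ of Theorem~\ref{main1} lies in the constructible closure of $\{r_p\}$, yet $1\in A_{r_{\mathscr{U}}}$ while $1\notin F_{r_{\mathscr{U}}}$ for every finite $F\subseteq A$. So the proconstructibility assertion is false in general, and the paper's proof rests on the same unjustified step as yours. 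What survives is that $\mathcal{X}_{\textup{fin}}$ is spectral. To prove it, apply Lemma~\ref{spectral} directly, using the subbasis $\{U_{F,x}\cap\mathcal{X}_{\textup{fin}} : F\in\mathcal{P}_{\mathrm{fin}}(G),\ x\in G\}$ and the limit point $(r_{\mathscr{U}})_s$. This point is finitary and agrees with $r_{\mathscr{U}}$ on finite sets.
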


\begin{proof}
    First note that $$\mathcal{X}_{\textup{fin}}=\textup{Fix}(\Phi)=\{r\in \mathcal{X}: r=\Phi(r)\}.$$  Since $\mathcal{X}$ is spectral by Theorem \ref{main1}, Lemma \ref{hausd} implies that the constructible topology on $\mathcal{X}$ is Hausdorff. On the other hand, by Proposition \ref{patch-continuous}$, \Phi$ is continuous with respect to the constructible topology on $\mathcal{X}$. Thus, $\textup{Fix}(\Phi)=\mathcal{X}_{\textup{fin}}$ is a proconstructible in $\mathcal{X}$. In particular, Lemma \ref{proconstr} follows that $\mathcal{X}_{\textup{fin}}$ is a spectral space. 
\end{proof}
\medskip

\begin{remark}\label{diff1}
Let $R$ be an integral domain. Denote by $\mathrm{SStar}(R)$ the set of all
semistar operations on $R$, and by $\mathrm{SStar}_f(R)$ the subset of finitary
semistar operations. Endow $\mathrm{SStar}(R)$ with the Zariski topology and
$\mathrm{SStar}_f(R)$ with the induced subspace topology, as in
Definition~\ref{defnzartop}. It is shown in \cite{Ca-Da14} that
$\mathrm{SStar}_f(R)$ is a spectral space, whereas $\mathrm{SStar}(R)$ is not
spectral in general; for an explicit counterexample, see \cite[Proposition 4.4]{da18}.
\end{remark}

\bigskip
\section{A characterization result}\label{characterizationn}

This section  provides a characterization of when a subspace of the set of all overmonoids of $H$ is  quasi-compact.  

\begin{theorem}\label{main2}
Let $H$ be a monoid, and let $G$ be its quotient groupoid. Let
$\Delta\subseteq \mathcal{R}(G| H)$ be a subset of the set of all overmonoids
of $H$. Then $\Delta$ is quasi-compact (with respect to the subspace topology) if
and only if $r_{\Delta}$ is finitary.
\end{theorem}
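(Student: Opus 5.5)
The plan is to mimic the ring-theoretic argument from \cite{Ca-Da14}. We translate membership $x\in A_{r_\Delta}$ into a covering condition on $\Delta$ by subbasic open sets $U(\cdot)$. The basic observation is the following. For $A\subseteq G$ and $x\in G$, we have $x\in SA$ if and only if there exists $a\in A$ with $x\in Sa$. When $a\neq 0$ this means $xa^{-1}\in S$, that is, $S\in U(xa^{-1})$. Hence
\[
x\in A_{r_\Delta}\iff \Delta\subseteq \bigcup_{a\in A} \{S\in\mathcal{R}(G|H): x\in Sa\}.
\]
The sets on the right are either open sets $U(xa^{-1})$ or trivial ones. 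The case $a=0$ only contributes when $x=0$, and $0\in A_{r_\Delta}$ always holds anyway, so we may assume $x\neq 0$ and $a\in A^\bullet$.

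For ($\Rightarrow$), assume $\Delta$ is quasi-compact. Take $x\in A_{r_\Delta}$ with $x\neq 0$. The family $\{U(xa^{-1})\cap\Delta: a\in A^\bullet\}$ is an open cover of $\Delta$. Extract a finite subcover indexed by $a_1,\dots,a_n$ and set $F=\{a_1,\dots,a_n\}$. Then every $S\in\Delta$ contains some $xa_i^{-1}$, so $x\in SF$ for all $S\in\Delta$, giving $x\in F_{r_\Delta}$. The reverse inclusion $\bigcup_F F_{r_\Delta}\subseteq A_{r_\Delta}$ is \textbf{M2}, so $r_\Delta$ is finitary. The degenerate case $A=\emptyset$ is handled directly, since both sides equal $\bigcap_S S\emptyset$, which is the empty set or $\{0\}$ depending on conventions.

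For ($\Leftarrow$), assume $r_\Delta$ is finitary. By Lemma~\ref{alex} it suffices to treat covers of $\Delta$ by subbasic sets $U(x_\lambda)\cap \Delta$, $\lambda\in I$. If some $x_\lambda=0$, then $U(0)$ is all of $\mathcal{R}(G|H)$, because $0\in H\subseteq S$, and we are done. Otherwise set $A=\{x_\lambda^{-1}:\lambda\in I\}\subseteq G^\bullet$. Each $S\in\Delta$ contains some $x_\lambda$, so $1=x_\lambda x_\lambda^{-1}\in SA$, hence $1\in A_{r_\Delta}$. By finitarity there is a finite $F=\{x_{\lambda_1}^{-1},\dots,x_{\lambda_n}^{-1}\}\subseteq A$ with $1\in F_{r_\Delta}$. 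Thus every $S\in\Delta$ satisfies $1=s x_{\lambda_i}^{-1}$ for some $s\in S$ and some $i$, so $x_{\lambda_i}=s\in S$. This produces the finite subcover $U(x_{\lambda_1}),\dots,U(x_{\lambda_n})$.

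The only delicate point is the bookkeeping with the zero element and with the empty set (including $\Delta=\emptyset$, where $r_\Delta(A)=G$ is trivially finitary and $\Delta$ is trivially compact). The key technical step is the reformulation $x\in Sa\iff xa^{-1}\in S$, which uses that $G^\bullet$ is a group. I expect no real obstacle beyond checking that these degenerate cases are consistent with the paper's conventions.
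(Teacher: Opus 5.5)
Your proposal is correct and follows essentially the same argument as the paper. For ($\Rightarrow$) you cover $\Delta$ by the sets $\{S\in\Delta : x\in aS\}=\Delta\cap U(a^{-1}x)$ and extract a finite $F\subseteq A$; for ($\Leftarrow$) you apply the subbasis lemma to a cover by sets $\Delta\cap U(x_\lambda)$, take $A=\{x_\lambda^{-1}\}$ with $1\in A_{r_{\Delta}}$, and use finitarity. The differences are only cosmetic: you argue the converse directly rather than by contradiction, and you treat the degenerate cases ($x=0$, $A=\emptyset$, $\Delta=\emptyset$) explicitly.
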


\begin{proof}
Suppose first that $\Delta$ is quasi-compact. Let $A\subseteq G$ be arbitrary and
let $x\in A_{r_{\Delta}}$. By definition of $r_{\Delta}$, we have
$x\in AS$ for every $S\in \Delta$.

For each $a\in A$, define
\[
W(a)
:=
\{S\in \Delta : x\in aS\}.
\]
If $a\neq 0$, then $W(a)=\Delta\cap U(a^{-1}x)$, whereas if $a=0$, then  $W(0)$ is either $\emptyset$ or $\Delta$, depending on whether $x\neq 0$ or $x=0$.
Since $U(a^{-1}x)$ is open in the Zariski topology on $\mathcal{R}(G| H)$, each
$W(a)$ is open in the subspace topology on $\Delta$. Moreover, for every
$S\in \Delta$ there exists $a\in A$ such that $S\in W(a)$. Hence
$\{W(a)\}_{a\in A}$ is an open cover of $\Delta$.

By quasi-compactness, there exist $a_1,\ldots,a_n\in A$ such that
\[
\Delta=\bigcup_{i=1}^n W(a_i).
\]
Let $F:=\{a_1,\ldots,a_n\}\subseteq A$. Then for any $S\in \Delta$, we have
$x\in a_i S\subseteq FS$ for some $i\in [1,n]$. Consequently,
\[
x\in \bigcap_{S\in \Delta} FS = F_{r_{\Delta}}.
\]
Thus, every element $x\in A_{r_{\Delta}}$ belongs to $F_{r_{\Delta}}$ for some
finite subset $F\subseteq A$. Hence,
\[
A_{r_{\Delta}}
\subseteq
\bigcup_{F\in \mathcal{P}_{\mathrm{fin}}(A)} F_{r_{\Delta}}.
\]
The reverse inclusion follows immediately from the  Axiom \textbf{M2} of $r_{\Delta}$,
and therefore $r_{\Delta}$ is finitary.

Conversely, suppose that $r_{\Delta}$ is finitary and assume, for contradiction,
that $\Delta$ is not quasi-compact. By Lemma~\ref{alex}, there exists a family
$\{x_i\}_{i\in I}\subseteq G$ such that $\{\Delta\cap U(x_i)\}_{i\in I}$ is an open
cover of $\Delta$ with no finite subcover. Equivalently, for every $S\in \Delta$
there exists $i\in I$ such that $x_i\in S$, and for every finite subset
$J\subseteq I$ there exists $S_J\in \Delta$ such that $x_j\notin S_J$ for all
$j\in J$. Note that each $x_i\neq 0$, since $\Delta\cap U(0)=\Delta$. Define
\[
A:=\{x_i^{-1}: i\in I\}\subseteq G^{\bullet}.
\]
For any $S\in \Delta$, there exists $i\in I$ such that $x_i\in S$, and hence $1\in AS$. It follows that
\[
1\in \bigcap_{S\in \Delta} AS = A_{r_{\Delta}}.
\]

We claim that for every finite subset $F\subseteq A$, we have
$1\notin F_{r_{\Delta}}$. Indeed, let
$F=\{x_{j_1}^{-1},\ldots,x_{j_n}^{-1}\}$ for some finite
$J=\{j_1,\ldots,j_n\}\subseteq I$. By assumption, there exists $S_J\in \Delta$
such that $x_{j_k}\notin S_J$ for all $k\in [1,n]$, which implies
$1\notin FS_J$. Since $S_J\in \Delta$, this yields
\[
1\notin \bigcap_{S\in \Delta} FS = F_{r_{\Delta}}.
\]

Thus, $1\in A_{r_{\Delta}}$ but $1\notin F_{r_{\Delta}}$ for any finite
$F\subseteq A$, contradicting the assumption that $r_{\Delta}$ is finitary.
Therefore, $\Delta$ must be quasi-compact, completing the proof.
\end{proof}

For any $\Delta \subseteq \mathcal{R}(G | H)$, the generalized  $H$-module system $r_{\Delta}$ is idempotent. 
Indeed, for every $A \subseteq G$, we have
\[
A_{r_{\Delta}} 
= 
\bigcap_{S \in \Delta} SA 
\supseteq A,
\]
and hence
\[
(A_{r_{\Delta}})_{r_{\Delta}} \supseteq A_{r_{\Delta}}.
\]
On the other hand,
\[
(A_{r_{\Delta}})_{r_{\Delta}}
=
\left( \bigcap_{S \in \Delta} SA \right)_{r_{\Delta}}
=
\bigcap_{T \in \Delta} T \left( \bigcap_{S \in \Delta} SA \right)
\subseteq
\bigcap_{T \in \Delta} T(TA)
=
A_{r_{\Delta}}.
\]
Therefore, $(A_{r_{\Delta}})_{r_{\Delta}} = A_{r_{\Delta}}$, and thus $r_{\Delta}$ is idempotent.

Consequently, Theorem~\ref{main2} also holds for $H$-module systems.
\bigskip

\begin{remark}\label{diff2}
Let $R$ be a ring with quotient field $K$, and let $\Delta \subseteq \mathcal{R}(K| R)$ be a collection of overrings of $R$. According to \cite[Corollary~2.8]{Ca-Da14}, if $\Delta$ is quasi-compact, then the semistar operation $\star_{\Delta}$ is finitary. However, the converse does not hold in general, as shown in \cite[Example~3.6]{Ca-Ma-Da15}.
\end{remark}
\bigskip
\begin{lemma}[\cite{hoch69} Proposition 4]\label{Hochster} For a topological space $X$ the following conditions are equivalent.
\begin{enumerate}
    \item $X$ is a spectral space.
    \item  $X$ is quasi-compact, sober and has a basis of quasi-compact open subspaces which is closed under finite intersections.
\end{enumerate}
\end{lemma}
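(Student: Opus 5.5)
We plan to prove the two implications separately. The harder direction, (2)$\Rightarrow$(1), will be reduced to the ultrafilter criterion of Lemma~\ref{spectral}, which we are allowed to assume.

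For (1)$\Rightarrow$(2) it suffices to check the properties on $X=\textup{Spec}(R)$ for a ring $R$, since all of them are topological invariants. We take as basis the principal opens $D(f)=\{\mathfrak p: f\notin\mathfrak p\}$.
\begin{enumerate}
\item The family is closed under finite intersections, because $D(f)\cap D(g)=D(fg)$ and $D(1)=X$.
\item Each $D(f)$ is quasi-compact. It is homeomorphic to $\textup{Spec}(R_f)$, and $\textup{Spec}$ of any ring is quasi-compact: if $X=\bigcup_i D(f_i)$, then the ideal $(f_i)_i$ is not contained in any prime, so $1$ is a finite $R$-linear combination of the $f_i$.
\item For sobriety, let $Z=V(I)$ be an irreducible closed set and put $\mathfrak p=\sqrt I$. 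Irreducibility forces $\mathfrak p$ to be prime: if $ab\in\mathfrak p$, then $Z\subseteq V(a)\cup V(b)$, so $Z$ lies in one of them. Hence $Z=\overline{\{\mathfrak p\}}$. The generic point is unique because $\textup{Spec}(R)$ is $T_0$: distinct primes have distinct closures $V(\mathfrak p)$.
\end{enumerate}

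For (2)$\Rightarrow$(1), let $\mathcal B$ be the given basis of quasi-compact opens, closed under finite intersections. We use $\mathcal B$ as the subbasis $\mathcal S$ in Lemma~\ref{spectral}.
\begin{itemize}
\item $X$ is $T_0$. If $\overline{\{x\}}=\overline{\{y\}}$, this closed set is irreducible with two generic points, so $x=y$ by uniqueness in the definition of sober.
\item Fix an ultrafilter $\mathscr U$ on $X$ and set
\[
Z:=\bigcap\{X\setminus B : B\in\mathcal B,\ X\setminus B\in\mathscr U\}.
\]
This is an intersection of closed sets, so $Z$ is closed. Any finite subfamily has intersection in $\mathscr U$, hence nonempty. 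By quasi-compactness of $X$, $Z\neq\emptyset$.
\end{itemize}

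The key equivalence is that, for $B\in\mathcal B$,
\[
Z\cap B\neq\emptyset \iff B\in\mathscr U .
\]
\begin{itemize}
\item ($\Rightarrow$) If $B\notin\mathscr U$, then $X\setminus B\in\mathscr U$ by Lemma~\ref{ultrafilterproperty}, so $Z\subseteq X\setminus B$ and $Z\cap B=\emptyset$.
\item ($\Leftarrow$) Suppose $B\in\mathscr U$. The closed subsets $B\cap(X\setminus B')$ of $B$, for $X\setminus B'\in\mathscr U$, have the finite intersection property, because their finite intersections lie in $\mathscr U$. Since $B$ is quasi-compact, their total intersection $Z\cap B$ is nonempty.
\end{itemize}

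Next we show $Z$ is irreducible. It suffices to show that any two nonempty basic open subsets $Z\cap B_1$ and $Z\cap B_2$ of $Z$ meet. By the equivalence, $B_1,B_2\in\mathscr U$. Then $B_1\cap B_2\in\mathscr U$, and $B_1\cap B_2\in\mathcal B$ by hypothesis. Applying the equivalence again gives $Z\cap B_1\cap B_2\neq\emptyset$.

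By sobriety, $Z=\overline{\{x\}}$ for some $x$. For an open $B$ and the closed set $Z=\overline{\{x\}}$, we have $x\in B$ iff $Z\cap B\neq\emptyset$. Combined with the equivalence, $x\in B$ iff $B\in\mathscr U$ for every $B\in\mathcal B$. Thus $x\in X_{\mathcal B}(\mathscr U)$, this set is nonempty, and Lemma~\ref{spectral} shows $X$ is spectral.

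The step we expect to be the main obstacle is the direction ($\Leftarrow$) of the key equivalence together with the irreducibility argument. This is exactly where quasi-compactness of each basic open, and not just of $X$, is used, and where closure of $\mathcal B$ under finite intersections matters. One could worry that using Lemma~\ref{spectral} is circular, since that lemma is itself proved via Hochster's theorem. In that case we would instead use the same point $x$ to show that $X$ is homeomorphic to the inverse limit of finite $T_0$ spaces (each of which is spectral), and cite that such inverse limits are spectral. In the context of this paper, however, the reduction above is the intended route.
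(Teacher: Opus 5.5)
The paper gives no proof of this lemma; it imports it from \cite{hoch69}. So yours is a genuinely different route, and it is correct.

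Direction (1)$\Rightarrow$(2) is the standard verification on $\textup{Spec}(R)$. In the sobriety step you implicitly use that $V(I)\subseteq V(a)$ forces $a\in\sqrt I$, and that $Z\neq\emptyset$, to get $\sqrt I$ a proper prime.

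In (2)$\Rightarrow$(1) every step checks:
\begin{itemize}
\item uniqueness of generic points gives $T_0$;
\item $Z$ is a nonempty closed set;
\item the equivalence $Z\cap B\neq\emptyset\iff B\in\mathscr U$ uses quasi-compactness of each $B$ exactly where you say;
\item closure of $\mathcal B$ under pairwise intersections gives irreducibility of $Z$;
\item the generic point of $Z$ lies in $X_{\mathcal B}(\mathscr U)$.
\end{itemize}

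What your route buys is the observation that, inside the paper, Lemma~\ref{Hochster} is a formal consequence of Lemma~\ref{spectral} plus elementary commutative algebra. The two cited results are therefore not independent inputs. Your argument is essentially the proof that Hochster's topological axioms force nonempty ultrafilter limit sets for the basis of quasi-compact opens.

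What it does not buy is an independent proof. The substantive part of (2)$\Rightarrow$(1) is producing a ring $R$ with $\textup{Spec}(R)\cong X$, and that sits entirely inside Lemma~\ref{spectral}. The proof of that lemma in \cite{Ca14} itself appeals to Hochster's results. You anticipated this circularity, but your fallback through inverse limits of finite $T_0$ spaces does not remove it. Spectrality of such limits is again Hochster's theorem, and that fallback is only sketched.

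So the paper's bare citation and your reduction rest on the same deep input. Yours makes the easy direction and the bridge to the ultrafilter criterion explicit.
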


\bigskip
We proceed by presenting two applications of the Theorem.

\begin{corollary}
Let
$
\Delta=\{H_{\mathfrak{p}} : \mathfrak{p}\in s\text{-}\mathrm{spec}(H)\}$
be the collection of localizations of $H$ at its $s$-prime ideals. Then
$r_{\Delta}$ is finitary.
\end{corollary}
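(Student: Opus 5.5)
By Theorem~\ref{main2}, $r_{\Delta}$ is finitary if and only if $\Delta$ is quasi-compact in the subspace topology of $\mathcal{R}(G|H)$. So the plan is to prove that
\[
\Delta=\{H_{\mathfrak{p}}:\mathfrak{p}\in s\text{-}\mathrm{spec}(H)\}
\]
is quasi-compact. I would do this by realizing $\Delta$ as a continuous image of the quasi-compact space $s\text{-}\mathrm{spec}(H)$.

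First, $s\text{-}\mathrm{spec}(H)$ is quasi-compact. Indeed, $s$ is a finitary ideal system, since $X_s=XH=\bigcup_{x\in X}xH$ for $X\neq\emptyset$. Hence Theorem~\ref{pronconst} shows that $s\text{-}\mathrm{spec}(H)$ is spectral, and in particular quasi-compact. A direct argument also works: every proper $s$-ideal lies in $H\setminus H^\times$, which is itself the unique maximal $s$-ideal and is prime. So $s\text{-}\mathrm{spec}(H)$ has a unique closed point, and any open set containing it is the whole space. This alone gives quasi-compactness.

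Next, consider the map
\[
\lambda: s\text{-}\mathrm{spec}(H)\to\mathcal{R}(G|H),\qquad \mathfrak{p}\mapsto H_{\mathfrak{p}},
\]
whose image is $\Delta$. It suffices to check that $\lambda^{-1}(U(x))$ is open for every $x\in G$. By the computation in the proof of Theorem~\ref{prüfer}, $x\in H_{\mathfrak{p}}$ if and only if $(H:x)\nsubseteq\mathfrak{p}$. Here we write $x=h/s$ with $s\notin\mathfrak{p}$, so that $s\in(H:x)$; conversely, any $s\in(H:x)\setminus\mathfrak{p}$ gives $x=(sx)/s\in H_{\mathfrak{p}}$. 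Therefore
\[
\lambda^{-1}(U(x))=s\text{-}\mathrm{spec}(H)\setminus\mathbf{V}_s((H:x)),
\]
which is open. The case $x=0$ is trivial, since then $U(0)$ is everything. Hence $\lambda$ is continuous, and $\Delta=\lambda(s\text{-}\mathrm{spec}(H))$ is quasi-compact as the continuous image of a quasi-compact space.

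The only delicate step is the equality $\lambda^{-1}(U(x))=s\text{-}\mathrm{spec}(H)\setminus\mathbf{V}_s((H:x))$. The bookkeeping for $0$ and for the fraction representation $x=h/s$ must be handled carefully, and this requires cancellativity of $H$. Once that equality holds, Theorem~\ref{main2} yields the finitarity of $r_{\Delta}$.

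An even shorter alternative uses the observation above: with $\mathfrak{m}=H\setminus H^\times$, every $H_{\mathfrak{p}}$ contains $H_{\mathfrak{m}}=H$. Then every $A_{r_\Delta}$ equals $AH$, which is finitary directly. I would mention this as a sanity check.
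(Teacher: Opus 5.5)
Your proof is correct and follows the paper's route: you transfer the quasi-compactness of $s$-$\mathrm{spec}(H)$ to $\Delta$ via $\mathfrak{p}\mapsto H_{\mathfrak{p}}$ and then invoke Theorem~\ref{main2}. The only difference is that the paper simply asserts that $\Delta$ is homeomorphic to $s$-$\mathrm{spec}(H)$, whereas you prove the continuity that is actually needed, via $\lambda^{-1}(U(x))=s\text{-}\mathrm{spec}(H)\setminus\mathbf{V}_s((H:x))$; your closing remark that $H=H_{\mathfrak{m}}\in\Delta$ forces $A_{r_\Delta}=AH$ shows the statement is in fact immediate.
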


\begin{proof}
Observe that $\Delta$ is homeomorphic to $s$-$\mathrm{spec}(H)$. By
Theorem~\ref{proconstr}, the space $s$-$\mathrm{spec}(H)$ is spectral, and hence
quasi-compact by Lemma~\ref{Hochster}. The conclusion now follows from
Theorem~\ref{main2}.
\end{proof}

\bigskip

\begin{corollary}
Let
$
\Delta=\mathrm{Zar}(G| H)
$
be the set of all valuation overmonoids of $H$. Then $r_{\Delta}$ is finitary.
\end{corollary}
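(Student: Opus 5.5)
By Theorem~\ref{main2}, it suffices to show that $\Delta=\mathrm{Zar}(G|H)$ is quasi-compact in the subspace topology it inherits from $\mathcal{R}(G|H)$. That topology is exactly the Zariski topology on $\mathrm{Zar}(G|H)$, since by definition it is the one induced by the subbasic sets $U(x)$, $x\in G$.

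By Theorem~\ref{Zar}, $\mathrm{Zar}(G|H)$ is a spectral space. Lemma~\ref{Hochster} then gives that it is quasi-compact. Applying Theorem~\ref{main2} to $\Delta=\mathrm{Zar}(G|H)$ yields that $r_\Delta$ is finitary.

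Alternatively, quasi-compactness can be checked directly with the ultrafilter argument from the proof of Theorem~\ref{Zar}. By Lemma~\ref{alex}, it is enough to consider covers by subbasic sets $B(x_i)$. Suppose no finite subfamily covers. Then the complements $\mathrm{Zar}(G|H)\setminus B(x_i)$ have the finite intersection property, so they are all contained in some ultrafilter $\mathscr{U}$. The limit point $H_{\mathscr{U}}$ constructed in that proof is a valuation overmonoid. Since $B(x_i)\notin\mathscr{U}$ for every $i$, it lies in no $B(x_i)$, which contradicts the assumption that the sets $B(x_i)$ cover. The only point needing care is to confirm that the topology on $\Delta$ used in Theorem~\ref{main2} coincides with the one used in Theorem~\ref{Zar}; this holds because $B(x)=\mathrm{Zar}(G|H)\cap U(x)$.
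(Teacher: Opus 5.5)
Your proof is correct and follows the paper's argument: Theorem~\ref{Zar} makes $\mathrm{Zar}(G|H)$ spectral, Lemma~\ref{Hochster} then gives quasi-compactness, and Theorem~\ref{main2} turns this into finitariness of $r_\Delta$. Your extra checks are also sound: the identity $B(x)=\mathrm{Zar}(G|H)\cap U(x)$ shows the two topologies agree, and the direct ultrafilter argument via the limit point $H_{\mathscr{U}}$ gives quasi-compactness without going through Hochster's characterization.
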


\begin{proof}
By Theorem~\ref{Zar}, the space $\Delta$ is spectral, and therefore
quasi-compact by Lemma~\ref{Hochster}. The claim follows again from
Theorem~\ref{main2}.
\end{proof}

\bigskip
Next, we generalize \cite[Proposition 2.5]{Ca-Da14} and \cite[Proposition 2.7]{Ca-Da14} to module systems of monoids. 
\begin{proposition}\label{prop1}
Let $H$ be a monoid with quotient groupoid $G$. Define the natural map
\[
\iota:\mathcal{R}(G| H)\longrightarrow \mathcal{X},
\qquad
S\longmapsto r_{\{S\}},
\]
where both spaces are endowed with the Zariski topology. Then $\iota$ is a
topological embedding.
\end{proposition}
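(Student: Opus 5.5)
The plan is to verify the three properties that make $\iota$ a topological embedding: it is injective, it is continuous, and it is open onto its image. The main tool is a description of preimages of subbasic opens. Throughout, the subbasic opens of $\mathcal{X}$ are taken to be the sets $U_{A,x}=\{r\in\mathcal{X}: x\in A_r\}$ used in Proposition~\ref{quasi-compactt}. Since $U_A=U_{A,1}$ and, by \textbf{(Id3)}, $U_{A,x}=U_{x^{-1}A}$ for $x\neq 0$, these generate the same topology as in Definition~\ref{defnzartop}.

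\textbf{Injectivity.} Let $S,T\in\mathcal{R}(G|H)$. We have $\{1\}_{r_{\{S\}}}=S\cdot\{1\}=S$. Hence $r_{\{S\}}=r_{\{T\}}$ forces $S=T$.

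\textbf{Continuity.} For $A\subseteq G$ and $x\in G$ we compute
\[
\iota^{-1}(U_{A,x})=\{S\in\mathcal{R}(G|H): x\in SA\}=\bigcup_{a\in A}\{S: x\in aS\}.
\]
For $a\neq 0$ the set $\{S: x\in aS\}$ equals $U(a^{-1}x)$. For $a=0$ it is either $\emptyset$ (if $x\neq 0$) or all of $\mathcal{R}(G|H)$ (if $x=0$), exactly as in the proof of Theorem~\ref{main2}. So the preimage is a union of subbasic opens of $\mathcal{R}(G|H)$, hence open, and $\iota$ is continuous.

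\textbf{Openness onto the image.} It suffices to treat subbasic opens. The key identity is
\[
\iota(U(x))=\iota(\mathcal{R}(G|H))\cap U_{\{1\},x},
\]
which holds because $x\in\{1\}_{r_{\{S\}}}$ if and only if $x\in S$. Since $\iota$ is injective, images commute with finite intersections and arbitrary unions. Therefore the image of every open set of $\mathcal{R}(G|H)$ is open in $\iota(\mathcal{R}(G|H))$ with the subspace topology. Together with continuity and injectivity, this shows $\iota$ is a homeomorphism onto its image.

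The only step requiring some care is the continuity computation: one has to handle the element $0\in A$ separately and use $0\in S$ for all $S$. Apart from that, the argument is a direct unwinding of the definitions.
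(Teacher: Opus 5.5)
Your proof is correct and follows essentially the same route as the paper: injectivity via $\{1\}_{r_{\{S\}}}=S$ (the paper uses $H_{r_{\{S\}}}=S$), continuity by writing preimages of subbasic opens as unions of sets $U(a^{-1}x)$, and openness onto the image via $\iota(U(x))=\iota(\mathcal{R}(G|H))\cap U_{\{1\},x}$, which is the paper's identity with $U_{\{x^{-1}\}}$ rewritten using \textbf{(Id3)}. One small advantage of using $U_{\{1\},x}$ is that it also covers $x=0$, where the paper's $x^{-1}$ is undefined (harmlessly so, since $U(0)=\mathcal{R}(G|H)$).
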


\begin{proof}
Since $H_{r_{\{S\}}}=S$ for every $S\in \mathcal{R}(G| H)$, the map $\iota$ is
injective. To prove that $\iota$ is continuous, it suffices to show that the preimage of any
subbasic open set of $\mathcal{X}$ is open in $\mathcal{R}(G| H)$. Let
$A\subseteq G$. Then
\[
S\in \iota^{-1}(U_A)
\iff r_{\{S\}}\in U_A
\iff 1\in AS.
\]
Hence,
\[
\iota^{-1}(U_A)
=
\{S\in \mathcal{R}(G| H): 1\in AS\}
=
\bigcup_{a\in A^{\bullet}} U(a^{-1}),
\]
which is open in $\mathcal{R}(G| H)$. Thus $\iota$ is continuous.

Finally,  we show that for all $x\in G$, $\iota(U(x))$ is  an open in the image of $\iota$ endowed with the subspace topology. 
We claim that
\[
\iota(U(x))
=
U_{\{x^{-1}\}}\cap \iota(\mathcal{R}(G| H)).
\]
Indeed, if $r\in \iota(U(x))$, then $r=r_{\{S\}}$ for some overmonoid
$S\in \mathcal{R}(G| H)$ such that $x\in S$. Equivalently, $1\in x^{-1}S$,
which implies $r_{\{S\}}\in U_{\{x^{-1}\}}$, and hence
$r\in U_{\{x^{-1}\}}\cap \iota(\mathcal{R}(G| H))$.

Conversely, if $r\in U_{\{x^{-1}\}}\cap \iota(\mathcal{R}(G| H))$, then
$r=r_{\{S\}}$ for some $S\in \mathcal{R}(G| H)$ and $1\in x^{-1}S$, which
implies $x\in S$, and hence $r\in \iota(U(x))$.

Therefore, $\iota(U(x))$ is open in $\iota(\mathcal{R}(G| H))$ with the
subspace topology. This shows that $\iota$ is a topological embedding.
\end{proof}
\bigskip
\begin{proposition}\label{prop2}
    Let $\tau\subseteq \mathcal{X}_{\textup{fin}}$ be  quasi-compact.  Then $\bigwedge(\tau)$ is finitary. 
\end{proposition}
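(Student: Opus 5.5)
Fix $A\subseteq G$ and $x\in A_{\bigwedge(\tau)}$; we must find a finite $F\subseteq A$ with $x\in F_{\bigwedge(\tau)}$. The inclusion $\bigcup_{F\in\mathcal{P}_{\mathrm{fin}}(A)}F_{\bigwedge(\tau)}\subseteq A_{\bigwedge(\tau)}$ follows at once from Axiom \textbf{M2} for $\bigwedge(\tau)$, so only this direction needs work. The argument mirrors the first half of the proof of Theorem~\ref{main2}, with the subbasic sets $U_{F,x}$ of $\mathcal{X}$ playing the role of the sets $W(a)$.

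Since $x\in A_{\bigwedge(\tau)}=\bigcap_{r\in\tau}A_r$, we have $x\in A_r$ for every $r\in\tau$. Each $r\in\tau$ is finitary, so $A_r=\bigcup_{F\in\mathcal{P}_{\mathrm{fin}}(A)}F_r$. Hence for each $r\in\tau$ there is a finite $F\subseteq A$ with $x\in F_r$, that is, $r\in U_{F,x}$. Therefore
\[
\tau\subseteq\bigcup_{F\in\mathcal{P}_{\mathrm{fin}}(A)}U_{F,x}.
\]
Each $U_{F,x}=\{r\in\mathcal{X}: x\in F_r\}$ is open in $\mathcal{X}$. Indeed, if $x\neq 0$, Axiom \textbf{Id3} gives $U_{F,x}=U_{x^{-1}F}$. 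If $x=0$, Axiom \textbf{Id1} gives $U_{F,0}=\mathcal{X}$. So this is an open cover of $\tau$ in the subspace topology.

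By quasi-compactness of $\tau$ there are finitely many finite subsets $F_1,\dots,F_n\subseteq A$ with $\tau\subseteq\bigcup_{i=1}^nU_{F_i,x}$. Set $F:=F_1\cup\dots\cup F_n$, a finite subset of $A$. For each $r\in\tau$ choose $i$ with $x\in (F_i)_r$; by \textbf{M2}, $(F_i)_r\subseteq F_r$, so $x\in F_r$. Thus $x\in\bigcap_{r\in\tau}F_r=F_{\bigwedge(\tau)}$, which is the required inclusion. Hence $\bigwedge(\tau)$ is finitary.

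The only steps needing care are that the sets $U_{F,x}$ are open, including the degenerate case $x=0$, and the edge case $\tau=\emptyset$. If $\tau=\emptyset$, then $\bigwedge(\tau)$ is the constant map $A\mapsto G$. Under the convention that the empty intersection is $G$, this map is finitary, since $\emptyset_{\bigwedge(\tau)}=G$ already. Beyond these points the argument is a direct compactness step.
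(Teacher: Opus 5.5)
Your proof is correct and is essentially the paper's argument: you cover $\tau$ by the subbasic opens $U_{F,x}$ (the paper writes them as $U_{x^{-1}E^{(r)}}$, indexed by $r\in\tau$ rather than by finite $F\subseteq A$), extract a finite subcover, and use \textbf{M2} on the union of the finitely many finite sets. Your explicit treatment of $x=0$ via \textbf{Id1} is a small improvement, since the paper's rewriting $x\in (E^{(r)})_r\iff 1\in x^{-1}(E^{(r)})_r$ tacitly assumes $x\neq 0$.
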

\begin{proof}
   Fix a subset  $A\subseteq G$ and an element $x\in A_{\bigwedge(\tau)}$. By definition of $\bigwedge(\tau)$, we have  $x\in A_r$ for all $r\in \tau$, which implies that there exists a finite subset $E^{(r)}\subseteq A$ such that $x\in (E^{(r)})_r $ for all $r\in \tau$. Hence $1\in x^{-1}(E^{(r)})_r$, equivalently $r\in U_{x^{-1}E^{(r)}}$ for all $r\in \tau$. Thus $$\{U_{x^{-1}E^{(r)}}:r\in \tau\}$$ is an open cover for $\tau$. By quasi-compactness of $\tau$, it admits a finite subcover $$\{U_{x^{-1}E^{(r_i)}}:i\in [1,n]\}.$$ Set $E:=\bigcup_{i=1}^{n}E^{(r_i)}$, which is a finite subset of $A$. Now for any $r\in \tau$ there exists $U_{x^{-1}E^{(r_j)}}$ such that $r\in U_{x^{-1}E^{(r_j)}}$, by definition,  $x\in (E^{(r_j)})_r\subseteq E_r$ which follows that $x\in E_{\bigwedge(\tau)}$. Hence $\bigwedge(\tau)$ is finitary. 
\end{proof}

\bigskip
\noindent
\textbf{Conflict of interest statement.} The authors state that there is no conflict of interest.

\providecommand{\bysame}{\leavevmode\hbox to3em{\hrulefill}\thinspace}
\providecommand{\MR}{\relax\ifhmode\unskip\space\fi MR }
\providecommand{\MRhref}[2]{%
  \href{http://www.ams.org/mathscinet-getitem?mr=#1}{#2}
}



\begin{thebibliography}{10}



  

\bibitem{bag-fon04}    
S. ~El Baghdadi, M.~Fontana,  \emph{Semistar linkedness and flatness, Prüfer semistar multiplication domains},  Commun. Algebra \textbf{32}, (2004), pp.~1101 -- 1126.

\bibitem{dickman19}    
M. ~Dickmann, N.~Schwartz, M.~Tressl,  \emph{Spectral spaces}  Vol. 35 of New Mathematical Monographs, Cambridge University Press, Cambridge,  (2019).

\bibitem{dobb87}    
D. E. ~Dobbs, R.~Fedder, M.~Fontana,  \emph{Abstract Riemann surfaces of integral domains and spectral spaces},  Ann. Mat. Pura Appl. \textbf{148}, (1987), pp.~101 -- 115.

\bibitem{dobb86}    
D. E. ~Dobbs,  M.~Fontana,  \emph{Kronecker Function Rings and Abstract Riemann
Surfaces},  J. Algebra \textbf{99}, (1986), pp.~263 -- 274.

\bibitem{Ca-Da14}  
C. A. ~Finocchiaro, D.~Spirito,  \emph{Some topological considerations on semistrar operations},  {J}ournal of {A}lgebra, (2014), pp.~199 -- 218.

\bibitem{Ca14}  
C. A. ~Finocchiaro,   \emph{Spectral spaces and ultrafilters},  Commun. Algebra \textbf{42} , (2014), pp.~1496 -- 1508.

\bibitem{Ca-Ma-Da15}
C. A. ~Finocchiaro, M.~ Fontana,   D.~Spirito, \emph{New distinguished classes of spectral spaces: a survey},   (2015).

\bibitem{fin-fon-lop13}
C. A. ~Finocchiaro, M.~ Fontana,   K. A. ~Loper, \emph{The constructible topology on the space of valuation domains}, Trans. Amer. Math. Soc \textbf{365}  (2013), pp. ~6199--6216.

\bibitem{fin-fon-dar16}
C. A. ~Finocchiaro, M.~ Fontana,   D. ~Spirito, \emph{A topological version of Hilbert's Nullstellensatz}, J. Algebra \textbf{461}  (2016), pp. ~25--41.

\bibitem{fo-lo-forum} M. ~Fontana, K.A.~ Loper, \emph{A generalization of Kronecker function rings and Nagata rings},
Forum Math. \textbf{19} (2007), no. 6, 971–1004.

\bibitem{fo-lo-survey} M. ~Fontana, K.A.~ Loper, \emph{An historical overview of Kronecker function rings, Nagata rings, and related star and semistar operations}, New York, 2006, 169–187.

\bibitem{fo-lo-jpaa-2009} M. ~Fontana, K.A.~ Loper,  \emph{Cancellation properties in ideal systems: a classification of e.a.b. semistar operations}, 
J. Pure Appl. Algebra \textbf{213} (2009), no. 11, 2095–2103. 

\bibitem{fon-lop03} 
M. ~Fontana, K.A.~ Loper,   \emph{Nagata rings, Kronecker function rings, and related semistar operations}, Commun. Algebra \textbf{31}  (2003), pp. ~4775--4805.

\bibitem{fon-jar-san04}
M. ~Fontana, P.~ Jara, E. ~Santos,   \emph{Local-global properties for semistar operations}, Commun. Algebra \textbf{32}  (2004), pp. ~3111--3137.



\bibitem{alfred-kim-loper}
A. ~Geroldinger,  H. ~Kim, K. A. ~Loper,  \emph{On Long-Term Problems in Multiplicative Ideal Theory and Factorization Theory}, \url{https://arxiv.org/abs/2502.21020} (2025).




  \bibitem{h-c98} F.~Halter-Koch, \emph{Ideal Systems; An Introduction to Multiplicative Ideal Theory}, Marcel Dekker, Inc, (1998).
  
  \bibitem{halter-koch-2025} F.~Halter-Koch, \emph{Ideal theory of commutative rings and monoids}
  Lecture Notes in Math., 2368
  Springer, Cham, 2025.

  \bibitem{h-c03} F.~Halter-Koch, \emph{Weak module system and applications: a multiplicative theory of integral elements and the Morot property}, In: Commutative Ring Theory and Applications, Lecture Notes in Pure and Appl. Math., vol. 231, Marcel Dekker (2003), pp. ~213--231.

   \bibitem{h-c11} F.~Halter-Koch, \emph{Multiplicative ideal theory in the context of commutative monoids}, In: M. Fontana, S.-E. Kabbaj, B. Olberding, I. Swanson (eds.)  Commutative Algebra: Noetherian and Non-noetherian Perspectives, Springer (2011), pp. ~203--231.

   \bibitem{h-c01} F.~Halter-Koch, \emph{Localizing systems, module systems and semistar operations}, J. Algebra \textbf{238}  (2001), pp. ~723--761.

\bibitem{hoch69} M.~Hochster, \emph{Prime ideal structure in commutative rings}, Trans. Amer. Math. Soc. \textbf{142}  (1969), pp. ~43--60.
   

\bibitem{jech98} T.~Jech, \emph{Set Theory}, Springer, New York, (1997).

     \bibitem{ju12} J.~Juett, \emph{Generalized comaximal factorization of ideals}, J. Algebra \textbf{352}  (2012), pp. ~141--166.

\bibitem{Krull-35} W. Krull, \emph{Idealtheorie}, Springer Berlin, Heidelberg, 1935. 

\bibitem{Ok-Ma94}
A. ~Okabe, R.~Matsuda,  \emph{Semistar operations on integral domains},  Toyama Math. J. \textbf{17} (1994), pp.~1 -- 21.

  
      \bibitem{da18} D.~Spirito, \emph{The Zariski topology on the sets of semistar operations without finite-type assumptions}, J. Algebra \textbf{513}  (2018), pp. ~27--49.

      \bibitem{doniyor26} D.~Yazdonov, \emph{On the structure of length sets with maximal elasticity}, Commun. Algebra \textbf{54(5)}  (2026), pp. ~2145--2158.

       \bibitem{zar44} O.~Zariski, \emph{The compactness of the Riemann manifold of an abstract field of algebraic
functions}, Bull. Amer. Math. Soc. \textbf{50}  (1944), pp. ~683--691.

  \bibitem{zar-sam75} O.~Zariski, P.~Samuel, \emph{Commutative Algebra},  Springer Verlag, Graduate Texts in Mathematics \textbf{29}, vol. 2, New York, (1975).


\end{thebibliography}

\end{document}